\newtheorem{theorem}{\bf Theorem}[section]
\newtheorem{lemma}[theorem]{\bf Lemma}
\begin{document}
\title{$k$-Pell-Lucas numbers which are concatenations of two repdigits}
\author{Bibhu Prasad Tripathy and Bijan Kumar Patel}
\date{}
\maketitle
\begin{abstract}
For any integer $k \geq 2$, let $\{Q_{n}^{(k)} \}_{n \geq -(k-2)}$ denote the $k$-generalized Pell-Lucas sequence which starts with $0, \dots ,2,2$($k$ terms) where each next term is the sum of the $k$ preceding terms. In this paper, we find all the $k$-generalized Pell-Lucas numbers that are concatenations of two repdigits.
\end{abstract} 

\noindent \textbf{\small{\bf Keywords}}: $k$-Pell-Lucas numbers, linear forms in logarithms, repdigits, reduction method. \\
{\bf 2020 Mathematics Subject Classification:} 11B39; 11J86; 11R52.

\section{Introduction}
The Pell-Lucas sequence $\{Q_n\}_{n\geq 0}$ is the binary recurrence sequence defined by 
\[
Q_{n+2} = 2 Q_{n+1} + Q_{n}~~ {\rm for} ~n\geq 0
\]
with the initial terms $Q_{0} = 2$ and $Q_{1} = 2$. 

Let $ k \geq 2 $ be an integer. We consider the $k$-generalized Pell-Lucas sequence $\{Q_{n}^{(k)} \}_{n \geq -(k-2)}$, which is a generalization of the Pell-Lucas sequence and is given by the recurrence
\[
Q_{n}^{(k)} = 2 Q_{n-1}^{(k)} + Q_{n-2}^{(k)} + \dots + Q_{n-k}^{(k)} ~~\text{for all}~n \geq 2,  
\]
with the initial terms $Q_{-(k-2)}^{(k)} = Q_{-(k-3)}^{(k)} = \dots = Q_{-1}^{(k)} = 0,  Q_{0}^{(k)} = 2$ and $Q_{1}^{(k)} = 2$. The expression $Q_{n}^{(k)}$ denotes the $n^{th}$ term of the $k$-Pell-Lucas sequence. The usual Pell-Lucas sequence $\{Q_n\}_{n\geq 0}$ is obtained when $k = 2$.


A natural number is called base $b$ repdigit if all its base $b$-digits are equal. Let $b > 1$ be any positive integer. A positive integer $N$ is called $b$ repdigit if it is of  the form 
\[
N = a\left( \frac{b^{l} - 1}{b-1} \right) = {\overline{
\mathop{\underbrace{a \cdots a}\limits_\text{$m$ times}
}}}\vphantom{0}_{~(b)},
\]
for some positive integers $a, l$ with $a \in \{1,2, \cdots, b-1 \}$ and $l \geq 1$. When $b = 10$, then it is said to be repdigit. For $k \geq 1$ and $b > 1$, a positive integer of the form 
\[
N = {\overline{
\mathop{\underbrace{a_{1} \cdots a_{1}}}\limits_\text{$m_{1}$ times}
\mathop{\underbrace{a_{2} \cdots a_{2}}}\limits_\text{$m_{2}$ times} \cdots
\mathop{\underbrace{{a_{k} \cdots a_{k}}}}\limits_\text{$m_{k}$ times}}}\vphantom{0}_{~(b)}
\]
where $a_{1}, a_{2}, \cdots , a_{k} \in  \{0, 1, 2, \cdots, b-1 \}$ with $a_{1} > 0$, can be called as concatenation of $k$ repdigits in base $b$.

In recent years, the concatenation of repdigits in different linear recurrence sequences has been studied by many researchers.  For example, Alahmadi et al. \cite{Alahmadi} studied the problem of finding all Fibonacci numbers which are concatenations of two repdigits. Following that Rayguru and Panda \cite{Rayguru} found the balancing number which is the concatenations of two repdigits. Erduvan and Keskin \cite{Erduvan} studied the Lucas numbers which are concatenations of two repdigits. Consequently, Ddamulira searched all  Tribonacci and Padovan numbers which are concatenations of two repdigits in \cite{Ddamulira} and \cite{Ddamulira1} respectively. Batte et al. \cite{Batte} searched for only Perrin numbers which are concatenations of two repdigits. 

Recently, Alahmadi \cite{Alahmadi1} found $k$-generalized Fibonacci numbers that are concatenations of two repdigits that have at most four digits. Consequently, Bravo et al. \cite{Bravo2} searched for all $k$-Lucas numbers that are concatenations of two repdigits. Later, \c{S}iar and Keskin \cite{Siar2} showed that 12, 13, 29, 33, 34, 70, 84, 88, 89, 228 and  $233$ are the only $k$-generalized Pell numbers, which are concatenations of two repdigits with at least two digits. In this paper, we find all the $k$-Pell-Lucas numbers that are concatenations of two repdigits. More precisely, we solve the Diophantine equation
\begin{equation}\label{eq 1.2}
  Q_{n}^{(k)} = a \left ( \frac{10^{l}-1}{9} \right) \cdot 10^{m} + b \left ( \frac{10^{m}-1}{9} \right),
\end{equation}
in non-negative integers $a, b, k, l, m, n$ with $a, b \in \{0, 1,\dots, 9\}$, $a > 0$, $k \geq 2$ and $l, m, n \geq 1$. In particular, our main result is the following.
\begin{theorem}\label{thm1}
The only $k$-Pell-Lucas numbers that are concatenations of two repdigits are
\begin{align*}
& Q_{1}^{(k)} = 2 \ \text{for} \ k \geq 2; & Q_{2}^{(k)} = 6 \  \text{for} \ k \geq 2; \\
&  Q_{3}^{(k)} = 16 \ \text{for} \  k \geq 3; \ & Q_{4}^{(k)} = 42 \ \text{for} \ k \geq 4; \\
& Q_{5}^{(k)} = 110 \ \text{for} \ k \geq 5; & Q_{6}^{(k)} = 2288 \ \text{for} \  k \geq 6; \\
& Q_{3}^{(2)} = 14,\quad Q_{4}^{(2)} = 34; \quad & Q_{4}^{(3)} = 40, \quad Q_{5}^{(2)} = 82;  \\
& Q_{7}^{(3)} = 662. \\
\end{align*}
\end{theorem}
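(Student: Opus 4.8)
The plan is to follow the now-standard Baker-type strategy for $k$-generalized sequences: reduce \eqref{eq 1.2} to two linear forms in logarithms, bound the exponents via Matveev's theorem, clear the residual $k$-dependence, and finish with a reduction procedure and a finite search. First I would record the analytic facts about $Q_n^{(k)}$ needed throughout (these belong to the preliminary section and are available in the literature on $k$-generalized Pell/Pell--Lucas numbers, cf. \cite{Bravo2}): the characteristic polynomial $x^{k}-2x^{k-1}-\cdots-1$ has a single dominant root $\gamma=\gamma(k)$ with $2<\gamma<\tfrac{3+\sqrt5}{2}$ uniformly in $k$; a Binet-type expansion $Q_n^{(k)}=g_k(\gamma)\,\gamma^{\,n}+E_k(n)$ in which $g_k(\gamma)$ lies between absolute constants, the height satisfies $h(\gamma)=\tfrac1k\log\gamma$, and $|E_k(n)|<\tfrac12$; a two-sided growth estimate of the form $\gamma^{\,n-1}\le Q_n^{(k)}\le\gamma^{\,n+1}$; and the explicit small-index values, namely $Q_n^{(k)}\in\{2,2,6,16,42,110,288,754,\dots\}$ for $n\le k$, with a $-2$ correction at $n=k+1$. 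Comparing the digit count $10^{\,l+m-1}\le Q_n^{(k)}<10^{\,l+m}$ with the growth estimate gives at once that $l+m\asymp n\,\log\gamma/\log 10$; in particular $l+m<n$.

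Next I would produce the \emph{first} linear form. Writing \eqref{eq 1.2} as $9Q_n^{(k)}=a\,10^{\,l+m}+(b-a)10^{\,m}-b$ and substituting the Binet expansion isolates the two dominant terms, yielding
\[
\left|\frac{9\,g_k(\gamma)}{a}\,\gamma^{\,n}\,10^{-(l+m)}-1\right|<\frac{C}{10^{\,l}}
\]
for an absolute constant $C$. Setting $\Lambda_1=n\log\gamma-(l+m)\log 10+\log\!\big(9\,g_k(\gamma)/a\big)$, the right-hand side forces $|\Lambda_1|$ to be exponentially small in $l$, while Matveev's theorem over $\mathbb{Q}(\gamma)$ gives a lower bound of the shape $\exp\!\big(-C_1\,k^{3}(1+\log k)(1+\log n)\big)$ (the rational $10$ contributes a factor $k\log 10$, the degree contributes $k^{2}(1+\log k)$, while $\gamma$ and $g_k(\gamma)$ have controlled height). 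Comparing the two bounds controls $l$, and hence through $l+m\asymp n$ a combination of the exponents, in terms of $k$ and $\log n$.

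I would then extract a \emph{second} linear form by regrouping: from $9Q_n^{(k)}+b=(a\,10^{\,l}+b-a)\,10^{\,m}$ one obtains
\[
\left|\frac{9\,g_k(\gamma)}{a\,10^{\,l}+b-a}\,\gamma^{\,n}\,10^{-m}-1\right|<\frac{C'}{10^{\,l+m}},
\]
and with $\Lambda_2=n\log\gamma-m\log 10+\log\!\big(9\,g_k(\gamma)/(a\,10^{\,l}+b-a)\big)$ a second application of Matveev (the rational term now has height $\asymp l\log 10$) bounds $m$ in terms of $l$, $k$ and $\log n$. Feeding the first inequality into the second collapses everything to an absolute, though $k$-dependent, bound $n<C_2\,k^{c}(\log k)^{c'}$ for absolute constants $c,c'$.

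The hard part is removing this dependence on $k$, since $[\mathbb{Q}(\gamma):\mathbb{Q}]=k$ and Matveev's constants grow with it, so the displayed bound does not by itself confine $(n,k)$ to a finite set. I would resolve this by the Bravo--Luca dichotomy. If $n\le k+1$, then $Q_n^{(k)}$ is given by the explicit values above, independent of the deep recurrence, and direct inspection yields the families $Q_1^{(k)},\dots,Q_6^{(k)}$ of Theorem~\ref{thm1} together with the two boundary solutions $Q_3^{(2)}=14$ and $Q_4^{(3)}=40$ coming from the $n=k+1$ correction (the values $\ge 754$, and the other corrected values such as $108,286,752$, all failing the concatenation test). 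If instead $n\ge k+2$, I would combine the polynomial bound $n<C_2\,k^{c}(\log k)^{c'}$ with a further estimate—coming from the near-equality of $\gamma$ and $g_k(\gamma)$ with their limiting ($k\to\infty$) values and the resulting control of $g_k(\gamma)\gamma^{\,n}$—to bound $k$ from above, leaving only finitely many admissible $k$. For each such $k$, because $\Lambda_1$ and $\Lambda_2$ are nonhomogeneous linear forms, I would apply the Baker--Davenport lemma in the Dujella--Peto form (or an equivalent LLL-based reduction) to lower the bound on $n$ to a few hundred. A direct computation over the resulting finite ranges of $(k,n,l,m,a,b)$ then recovers exactly the remaining sporadic solutions $Q_4^{(2)}=34$, $Q_5^{(2)}=82$ and $Q_7^{(3)}=662$, completing the list.
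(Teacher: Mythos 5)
Your proposal follows essentially the same route as the paper: the same two regroupings of \eqref{eq 1.2} into linear forms handled by Matveev's theorem over $\mathbb{Q}(\gamma)$, the same device of exploiting that $\gamma(k)$ is exponentially close to $\phi^{2}$ for large $k$ so as to pass to linear forms over $\mathbb{Q}(\sqrt{5})$ and remove the $k$-dependence, and the same Dujella--Peth\"{o} reduction followed by a finite search. The only cosmetic differences are the exponents of $k$ in the intermediate Matveev bounds and your (correct) explicit treatment of the $n=k+1$ correction $Q_{k+1}^{(k)}=2F_{2k+2}-2$, which the paper folds into its initial computer verification of the small-index cases.
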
 
For the proof of Theorem \ref{thm1}, we first find an upper bound for $n$ in terms of $k$ by applying Matveev's result on linear forms in logarithms \cite{Matveev}. When $k$ is small, the theory of continued fractions suffices to lower such bounds and complete the calculations. When $k$ is large, we use the fact that the dominant root of the $k$-Pell-Lucas sequence is exponentially close to $\phi^2$ {\rm{(see \cite{Siar})}} where $\phi$ denotes the golden section. So we use this estimation in our further calculation with linear forms in logarithms to obtain absolute upper bounds for $n$ which can be reduced by using  Dujella and Peth\"{o}'s result \cite{Dujella}.

\section{Preliminary Results}
\subsection{Properties of $k$-generalized Pell-Lucas sequence}
The characteristic polynomial of the $k$-generalized Pell-Lucas sequence is  
 \[
 \Phi_{k}(x) = x^{k} - 2 x^{k-1}  - x^{k-2} - \dots - x -1. 
 \]
The above polynomial is irreducible over $\mathbb{Q} [x]$ and it has one positive real root $\gamma := \gamma(k)$ which is located between $ \phi^ 2(1 - \phi ^{-k})$ and $\phi^2$, lies outside the unit circle (see \cite{Siar}). The other roots are firmly contained within the unit circle. To simplify the notation, we will omit the dependence on $k$ of $\gamma$ whenever no confusion may arise.

The Binet  formula for $Q_{n}^{(k)}$  that found in \cite{Siar} is
\begin{equation}\label{eq 2.3}
Q_{n}^{(k)} = \displaystyle\sum_{i=1}^{k} (2 \gamma_{i} - 2) g_{k} (\gamma_{i})\gamma_{i}^{n} = \sum_{i=1}^{k} \frac{2(\gamma_{i}-1)^{2}}{(k+1)\gamma_{i}^2 - 3k\gamma_{i} + k -1} \gamma_{i}^{n} ,
\end{equation}
where  $\gamma_{i}$ represents the roots of the characteristic polynomial $\Phi_{k}(x)$ and the function $g_{k}$ is  given by
\begin{equation}\label{eq 2.4}
	g_{k}(z) := \frac{z-1}{(k+1)z^2 - 3kz + k -1}, 
\end{equation}
for an integer $k$ $\geq$ 2.
\\
Additionally, it is  also shown in \cite[Lemma~10]{Siar} that the roots located inside the unit circle have a very minimal influence on the formula \eqref{eq 2.5}, which is given by the approximation 
\begin{equation}\label{eq 2.5}
	\left| Q_{n}^{(k)} -  (2\gamma-2)g_{k} (\gamma) \gamma^{n} \right| < 2 \quad \text{holds  for   all} \quad n \geq 2 - k.
\end{equation}
Furthermore, it is shown by \c{S}iar and Keskin in \cite[Lemma~10]{Siar} that the inequality
\begin{equation}\label{eq 2.6}
	\gamma^{n-1} \leq Q_{n} ^ {(k)} \leq 2 \gamma^{n} \text{ holds for all } n \geq 1 ~{\rm and}~ k \geq 2.
\end{equation}
\begin{lemma}\label{lem 2.1}{\rm{(\cite{Bravo}, Lemma 3.2)}}.
Let $k \geq 2 $  be an integer. Then we have
\[
0.276 < g_{k}(\gamma) < 0.5 \  and \  \left| g_{k}(\gamma_{i}) \right| < 1 \quad for \quad 2 \leq i \leq k.
\]	
\end{lemma}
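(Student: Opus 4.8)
The plan is to reduce both assertions to a single identity satisfied by the denominator of $g_k$ at any root of $\Phi_k$. Write $D_k(z) = (k+1)z^2 - 3kz + (k-1)$, so that $g_k(z) = (z-1)/D_k(z)$, and split it as $D_k(z) = k(z^2 - 3z + 1) + (z^2 - 1)$. Since $\phi^2 = (3+\sqrt5)/2$ is a root of $x^2 - 3x + 1$, we have $z^2 - 3z + 1 = (z - \phi^2)(z - \phi^{-2})$. The crux is that multiplying the characteristic polynomial by $(z-1)$ gives $(z-1)\Phi_k(z) = z^{k+1} - 3z^k + z^{k-1} + 1$, so every root $\rho \neq 1$ of $\Phi_k$ satisfies $\rho^{k-1}(\rho^2 - 3\rho + 1) = -1$, i.e.\ $\rho^2 - 3\rho + 1 = -\rho^{1-k}$. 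Substituting this into the splitting yields the uniform formula
\[
D_k(\rho) = (\rho^2 - 1) - k\,\rho^{1-k},
\]
valid at every root $\rho$, which concentrates all of the $k$-dependence in the single term $k\rho^{1-k}$.

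For the dominant root $\gamma$ I would first record that $\gamma > 2$: indeed $\Phi_k(2) = -(2^{k-1}-1) < 0$, so the largest real root exceeds $2$, while $\gamma < \phi^2$ is given. Then $k\gamma^{1-k} = k/\gamma^{k-1} < k/2^{k-1} \le 1$ and $\gamma^2 - 1 > 3$, so $D_k(\gamma) > 0$. The upper bound $g_k(\gamma) < \tfrac12$ is equivalent to $k/\gamma^{k-1} < (\gamma - 1)^2$, which holds because $(\gamma-1)^2 > 1 > k/\gamma^{k-1}$. For the lower bound, since $D_k(\gamma) \le \gamma^2 - 1$ it suffices to verify $\gamma - 1 > 0.276\,(\gamma^2 - 1)$, i.e.\ $\gamma + 1 < 1/0.276 = 3.623\ldots$, which follows from $\gamma < \phi^2 = 2.618\ldots$. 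This confines $g_k(\gamma)$ to $(0.276, 0.5)$ for every $k \ge 2$; the lower end reflects the limiting value $g_\infty = (5 - \sqrt5)/10 = 0.2764\ldots$ reached as $\gamma \to \phi^2$.

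For a root $\gamma_i$ inside the unit circle ($2 \le i \le k$) the same formula gives $|D_k(\gamma_i)| \ge k|\gamma_i|^{1-k} - |\gamma_i^2 - 1| > k - 2$, because $|\gamma_i| < 1$ forces $|\gamma_i|^{1-k} = |\gamma_i|^{-(k-1)} > 1$ while $|\gamma_i^2 - 1| < 2$; moreover $|\gamma_i - 1| < 2$. Hence for $k \ge 4$ we get $|\gamma_i - 1| < 2 < |D_k(\gamma_i)|$ and therefore $|g_k(\gamma_i)| < 1$, using nothing beyond $|\gamma_i| < 1$. The two cases $k = 2, 3$ I would settle directly from the explicit roots: for $k = 2$, $\gamma_2 = 1 - \sqrt2$ has $|\gamma_2| < \tfrac12$, and for $k = 3$ the conjugate pair satisfies $|\gamma_2|^2 = 1/\gamma < \tfrac34$, in both cases making $k/|\gamma_i|^{k-1} > 4$ and hence $|D_k(\gamma_i)| > 2 > |\gamma_i - 1|$.

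The genuine obstacle is the subdominant part: a priori the large coefficient $k$ inside $D_k$ could be cancelled by a root $\gamma_i$ whose modulus is near $1$, making $|D_k(\gamma_i)|$ small and the estimate fail. The identity $D_k(\gamma_i) = (\gamma_i^2 - 1) - k\gamma_i^{1-k}$ is precisely what prevents this: because $|\gamma_i| < 1$, the term $k\gamma_i^{1-k}$ is not small but in fact dominant (of modulus at least $k$), so $D_k(\gamma_i)$ is forced to be large. Once this is in hand, checking the transitional values $k = 2, 3$ and confirming the slim numerical margins in the dominant-root estimates (e.g.\ $\gamma < 2.623$ against the threshold $3.623$) is routine.
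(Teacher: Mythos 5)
The paper does not prove this lemma at all: it is imported verbatim from \cite{Bravo} (Lemma 3.2), so there is no in-paper argument to compare against. Your proposal is a correct, self-contained proof. The central identity checks out: $(z-1)\Phi_k(z) = z^{k+1} - 3z^k + z^{k-1} + 1$, so every root $\rho$ of $\Phi_k$ (none of which is $0$ or $1$, since $\Phi_k(0)=-1$ and $\Phi_k(1)=-k$) satisfies $\rho^2 - 3\rho + 1 = -\rho^{1-k}$, and combined with the decomposition $D_k(z) = k(z^2-3z+1) + (z^2-1)$ this gives $D_k(\rho) = (\rho^2 - 1) - k\rho^{1-k}$. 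The dominant-root estimates then follow from $2 < \gamma < \phi^2$ exactly as you say (the lower bound survives by the thin margin $1/(\phi^2+1) = 0.2764\ldots > 0.276$), and for the subdominant roots the term $k\gamma_i^{1-k}$ of modulus $> k$ forces $|D_k(\gamma_i)| > k-2$, which settles $k \ge 4$, with $k=2,3$ checked from the explicit root data. This is essentially the mechanism used in the cited source as well (the trick of multiplying $\Phi_k$ by $z-1$ to collapse it to a trinomial is standard in that literature), so your argument is not a detour but a legitimate reconstruction; its value here is that it makes the lemma independent of \cite{Bravo}. One presentational point: for $k=3$ you should state explicitly that $|\gamma_2|^2 = 1/\gamma$ comes from the product of the roots of $x^3 - 2x^2 - x - 1$ being $1$ together with the non-dominant roots forming a conjugate pair.
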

\noindent
Furthermore, \c{S}iar et al. \cite{Siar1} showed that the logarithmic height of $g_{k} (\gamma)$ satisfies
\begin{equation}\label{eq 2.7}
    h(g_{k}(\gamma)) < 5 \log k \quad \text{for all} \ k \geq 2.
\end{equation}
\begin{lemma}\label{lem 2.2}
Let $\gamma$ be the dominant root of the characteristic polynomial $\Phi_{k}(x)$  and consider the function $g_{k}(x)$ defined in \eqref{eq 2.4}. If $k \geq 50$ and $n > 1$ are integers satisfying $n < \phi^{k/2}$, then the following inequalities holds
\\
(i){\rm{(\cite{Siar}, Equation~30)}}
\[
\left| (2 \gamma - 2)\gamma^{n} - 2 \phi^{2n+1} \right| < \frac{4 \phi^{2n}}{\phi^{k/2}},
\]
(ii){\rm{(\cite{Siar}, Lemma~13)}}
\[
|g_{k}(\gamma) -  g_{k}(\phi^{2})| < \frac{4k}{\phi^{k}}.
\]
\end{lemma}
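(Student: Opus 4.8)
The plan is to exploit the single quantitative fact that drives both estimates: since $\gamma$ lies between $\phi^2(1-\phi^{-k})$ and $\phi^2$, we have the exponentially small gap
\[
0 < \phi^2 - \gamma < \phi^2 \phi^{-k} = \phi^{2-k}.
\]
Everything else is a first-order perturbation analysis of the two functions $z \mapsto (2z-2)z^n$ and $z \mapsto g_k(z)$ about the point $z = \phi^2$, using the golden-ratio identities $\phi^2 = \phi + 1$ and $\phi^4 = 3\phi + 2$ to evaluate the main terms.

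For part (i), I would first note that the target $2\phi^{2n+1}$ is exactly the value of $(2z-2)z^n$ at $z = \phi^2$, since $(2\phi^2 - 2)\phi^{2n} = 2(\phi^2-1)\phi^{2n} = 2\phi\,\phi^{2n} = 2\phi^{2n+1}$ by $\phi^2 - 1 = \phi$. I would then split the difference telescopically,
\[
(2\gamma-2)\gamma^n - 2\phi^{2n+1} = 2(\gamma-1)\bigl(\gamma^n - \phi^{2n}\bigr) + 2\phi^{2n}(\gamma - \phi^2),
\]
and bound each piece. For the first, factor $\phi^{2n} - \gamma^n = (\phi^2 - \gamma)\sum_{j=0}^{n-1}\phi^{2j}\gamma^{n-1-j} < (\phi^2-\gamma)\,n\,\phi^{2n-2}$, use $\gamma - 1 < \phi$, and absorb the factor $n$ via the hypothesis $n < \phi^{k/2}$; this produces a bound of size $2\phi^{2n+1-k/2}$. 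The second piece is at most $2\phi^{2n+2-k}$, negligible for $k \geq 50$. Summing and using $2\phi < 4$ then delivers the stated constant $4\phi^{2n}/\phi^{k/2}$.

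For part (ii), I would apply the mean value theorem, $g_k(\gamma) - g_k(\phi^2) = g_k'(\xi)(\gamma - \phi^2)$ for some $\xi$ between $\gamma$ and $\phi^2$, so that $|g_k(\gamma)-g_k(\phi^2)| < |g_k'(\xi)|\,\phi^{2-k}$. Writing $g_k = N/D$ with $N(z) = z-1$ and $D(z) = (k+1)z^2 - 3kz + k - 1$, the crucial observation is that the $k$-dependent terms in $D$ cancel at $z = \phi^2$:
\[
D(\phi^2) = (k+1)\phi^4 - 3k\phi^2 + k - 1 = 3\phi + 1,
\]
a constant independent of $k$ and bounded away from zero. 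Since $\gamma$ sits within $\phi^{2-k}$ of $\phi^2$, the denominator $D(\xi)^2$ stays close to $(3\phi+1)^2$ across the interval, whereas the numerator of $g_k'(z) = \bigl[D(z) - (z-1)(2(k+1)z - 3k)\bigr]/D(z)^2$ is only linear in $k$ near $\phi^2$. Hence $|g_k'(\xi)| = O(k)$, giving $|g_k(\gamma) - g_k(\phi^2)| = O(k\phi^{-k})$; a direct evaluation of the constant (well under $1$) confirms the bound $4k/\phi^k$ with room to spare.

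The main obstacle is the bookkeeping of constants rather than the estimation itself. In part (i) the inequality is tight enough that the factor $n$ must be traded for $\phi^{k/2}$ at precisely the right moment, and one must check $2\phi < 4$ rather than merely that the bound is of order $\phi^{2n-k/2}$. In part (ii) the entire argument hinges on the non-obvious cancellation making $D(\phi^2)$ independent of $k$, without which $g_k'(\xi)$ could not be controlled uniformly; verifying that $\xi$ never leaves the region where $D$ stays bounded away from zero is the one point where the hypothesis $k \geq 50$ (which makes $\phi^{2-k}$ minuscule) is genuinely needed.
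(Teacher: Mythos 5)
Your argument is essentially correct, but note that the paper does not prove this lemma at all: both parts are imported verbatim from \c{S}iar and Keskin (Equation~30 and Lemma~13 of the cited preprint), so there is nothing in this paper to compare against. Taken on its own terms, your proof works. For (i), the decomposition $(2\gamma-2)\gamma^n-2\phi^{2n+1}=2(\gamma-1)(\gamma^n-\phi^{2n})+2\phi^{2n}(\gamma-\phi^2)$ is exact, the telescoping bound $\phi^{2n}-\gamma^n<(\phi^2-\gamma)\,n\,\phi^{2n-2}$ is valid since $\gamma<\phi^2$, and with $\phi^2-\gamma<\phi^{2-k}$ and $n<\phi^{k/2}$ the two pieces sum to at most $\bigl(2\phi+2\phi^2\phi^{-k/2}\bigr)\phi^{2n}/\phi^{k/2}<4\phi^{2n}/\phi^{k/2}$ for $k\ge 50$, exactly as you claim. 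For (ii), the cancellation $D(\phi^2)=(k+1)(3\phi+2)-3k(\phi+1)+k-1=3\phi+1$ is correct and is indeed the crux; a direct computation gives $|g_k'(\xi)|\approx(3.62k+2.62)/(3\phi+1)^2<0.11k$ on the interval in question (where $D$ moves by only $O(k\phi^{-k})$), so $|g_k(\gamma)-g_k(\phi^2)|<0.11k\cdot\phi^{2-k}<0.3k/\phi^k$, comfortably inside $4k/\phi^k$. The one place where you should be slightly more explicit is the claim that $D(\xi)$ and the numerator of $g_k'(\xi)$ stay close to their values at $\phi^2$: both are polynomials with coefficients of size $O(k)$, so they drift by $O(k^2\phi^{-k})$ over an interval of length $\phi^{2-k}$, which is negligible for $k\ge 50$; spelling this out closes the only gap. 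Your route has the merit of making the paper self-contained where the authors chose merely to cite.
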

\begin{lemma}\label{lem 2.3}
Let $k \geq 50$ and suppose that $n < \phi^{k/2}$, then
\begin{equation}\label{eq 2.8}
   (2 \gamma - 2)g_{k} (\gamma) \gamma^{n} = \frac{2 \phi^{2n+1}}{\phi + 2}(1 + \xi), \quad \text{where} \quad  |\xi| < \frac{1.25}{\phi^{k/2}}.
\end{equation}
\end{lemma}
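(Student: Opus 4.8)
The plan is to write the left-hand side as the product $\bigl[(2\gamma-2)\gamma^{n}\bigr]\cdot g_{k}(\gamma)$ and to replace each factor by its ``golden ratio'' counterpart, tracking the relative error at each replacement. The one algebraic input I would establish first is the exact value of $g_{k}$ at $\phi^{2}$. Using $\phi^{2}=\phi+1$ one gets $\phi^{4}=3\phi+2$, so the denominator of $g_{k}(\phi^{2})$ telescopes to $(k+1)\phi^{4}-3k\phi^{2}+(k-1)=3\phi+1$ while the numerator is $\phi^{2}-1=\phi$; hence $g_{k}(\phi^{2})=\phi/(3\phi+1)=1/(\phi+2)$, independently of $k$ (the identity $\phi(\phi+2)=3\phi+1$ being again $\phi^{2}=\phi+1$). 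This is exactly the constant in the target formula and explains the shape of the claimed main term $2\phi^{2n+1}/(\phi+2)$.

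Next I would convert Lemma~\ref{lem 2.2} into two relative-error statements. Writing $(2\gamma-2)\gamma^{n}=2\phi^{2n+1}(1+\epsilon_{1})$, part (i) gives
\[
|\epsilon_{1}|=\frac{\bigl|(2\gamma-2)\gamma^{n}-2\phi^{2n+1}\bigr|}{2\phi^{2n+1}}<\frac{4\phi^{2n}/\phi^{k/2}}{2\phi^{2n+1}}=\frac{2}{\phi}\cdot\frac{1}{\phi^{k/2}}<\frac{1.2361}{\phi^{k/2}}.
\]
Similarly, writing $g_{k}(\gamma)=g_{k}(\phi^{2})(1+\epsilon_{2})=\tfrac{1}{\phi+2}(1+\epsilon_{2})$ and using part (ii) together with $1/g_{k}(\phi^{2})=\phi+2$,
\[
|\epsilon_{2}|=\frac{|g_{k}(\gamma)-g_{k}(\phi^{2})|}{g_{k}(\phi^{2})}<\frac{4k}{\phi^{k}}(\phi+2)=\frac{4k(\phi+2)}{\phi^{k/2}}\cdot\frac{1}{\phi^{k/2}}.
\]
Since $4k(\phi+2)/\phi^{k/2}$ is decreasing for $k\geq 50$ and already at $k=50$ is below $0.005$ (because $\phi^{25}>1.6\times10^{5}$), I obtain $|\epsilon_{2}|<0.005/\phi^{k/2}$ for all admissible $k$.

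Finally I would multiply the two expansions,
\[
(2\gamma-2)g_{k}(\gamma)\gamma^{n}=\frac{2\phi^{2n+1}}{\phi+2}\,(1+\epsilon_{1})(1+\epsilon_{2}),
\]
so that $\xi=\epsilon_{1}+\epsilon_{2}+\epsilon_{1}\epsilon_{2}$ and
\[
|\xi|\leq|\epsilon_{1}|+|\epsilon_{2}|+|\epsilon_{1}||\epsilon_{2}|<\frac{1.2361+0.005}{\phi^{k/2}}+\frac{(\text{negligible})}{\phi^{k}}<\frac{1.25}{\phi^{k/2}}.
\]
The main obstacle is purely quantitative: the constant $1.25$ leaves very little room, since the first factor alone already contributes $2/\phi\approx1.236$. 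The whole point of the hypotheses $k\geq 50$ and $n<\phi^{k/2}$ (the latter being what licenses Lemma~\ref{lem 2.2}) is to force $\epsilon_{2}$, and a fortiori the cross term, to be small enough that the surplus stays below $0.014/\phi^{k/2}$. I would therefore be careful to bound $|\epsilon_{1}|$ by the sharp value $2/\phi$ rather than by a cruder rounding, since any slack there could push the total past $1.25$.
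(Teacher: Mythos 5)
Your proposal is correct and is essentially the paper's own argument: both split the left-hand side into the factors $(2\gamma-2)\gamma^{n}$ and $g_{k}(\gamma)$, apply the two parts of Lemma~\ref{lem 2.2}, use $g_{k}(\phi^{2})=1/(\phi+2)$, and bound the resulting three error terms (the paper gets $1.24+0.005+0.005$, you get $1.2361+0.005+$ a negligible cross term, both under $1.25$). The only addition is your explicit verification of $g_{k}(\phi^{2})=1/(\phi+2)$, which the paper simply asserts.
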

\begin{proof}
By virtue of Lemma \ref{lem 2.2}, we have
\begin{equation*}
(2 \gamma - 2)\gamma^{n} = 2 \phi^{2n+1} + \delta \quad \text{and} \quad g_{k}(\gamma) = g_{k}(\phi^{2}) + \eta, 
\end{equation*}
where
\begin{equation}\label{eq 2.9}
 |\delta| < \frac{4 \phi^{2n}}{\phi^{k/2}} \quad \text{and} \quad |\eta| < \frac{4k}{\phi^{k}}.   
\end{equation}
Since $g_{k}(\phi^{2}) = \frac{1}{\phi + 2}$, we can write
\begin{align}\label{eq 2.10}
(2 \gamma - 2)\gamma^{n}g_{k}(\gamma) & = \left( 2 \phi^{2n+1} + \delta \right)  \left( g_{k}(\phi^{2}) + \eta\right) \nonumber \\ 
&  = \frac{2 \phi^{2n+1}}{\phi+2}(1 + \xi),
\end{align}
where 
\[
\xi = \frac{\delta}{2 \phi^{2n+1}} + (\phi +2)\eta + \frac{(\phi +2)\eta \delta}{2 \phi^{2n+1}}.
\]
Using \eqref{eq 2.9} in \eqref{eq 2.10}, we obtain
\[
|\xi|  < \frac{2/\phi}{\phi^{k/2}} + \frac{4k(\phi+2)}{\phi^{k}} + \frac{8k(\phi+2)}{\phi \cdot \phi^{3k/2}} < \frac{1.25}{\phi^{k/2}},
\]
where we have used the facts 
\[
\frac{2/\phi}{\phi^{k/2}} < \frac{1.24}{\phi^{k/2}}, \quad  \frac{4k(\phi+2)}{\phi^{k}} < \frac{0.005}{\phi^{k/2}} \quad \text{and} \quad \frac{(8k/\phi)(\phi+2)}{\phi^{3k/2}} < \frac{0.005}{\phi^{k/2}}, 
\]
for all  $k \geq 50$. This completes the proof.
\end{proof}

Finally, we give the following estimate which is an important key to our analysis. This is a direct consequence of Lemma \ref{lem 2.3}.
\begin{lemma}\label{lem 2.4}
Let $k \geq 2$ and suppose that $2n \geq k/2$. If $n < \phi^{k/2}$, then
\[
Q_{n}^{(k)} = \frac{2\phi^{2n+1}}{\phi +2} (1 + \zeta ) \quad \text{where} \quad |\zeta| < \frac{41}{\phi^{k/2}}.
\]
\end{lemma}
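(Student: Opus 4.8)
The plan is to treat $Q_n^{(k)}$ as its dominant Binet term plus a uniformly bounded tail, to replace that dominant term by the $\phi^2$-surrogate supplied by Lemma \ref{lem 2.3}, and to push every discrepancy into the single relative error $\zeta$. Concretely, I would start from \eqref{eq 2.5}, rewritten as
\[
Q_n^{(k)} = (2\gamma-2)g_k(\gamma)\gamma^n + \theta, \qquad |\theta| < 2,
\]
so that the whole problem reduces to comparing $(2\gamma-2)g_k(\gamma)\gamma^n$ with $\frac{2\phi^{2n+1}}{\phi+2}$ and then absorbing the bounded remainder $\theta$.

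For the principal range $k \geq 50$ this is immediate from Lemma \ref{lem 2.3}. Substituting $(2\gamma-2)g_k(\gamma)\gamma^n = \frac{2\phi^{2n+1}}{\phi+2}(1+\xi)$ into the display above and factoring out $\frac{2\phi^{2n+1}}{\phi+2}$ gives $Q_n^{(k)} = \frac{2\phi^{2n+1}}{\phi+2}(1+\zeta)$ with
\[
\zeta = \xi + \frac{(\phi+2)\theta}{2\phi^{2n+1}}.
\]
The hypothesis $2n \geq k/2$ is exactly what controls the second summand: it yields $2n+1 > k/2$, hence $\phi^{-(2n+1)} < \phi^{-k/2}$, and therefore
\[
|\zeta| \leq |\xi| + \frac{(\phi+2)|\theta|}{2\phi^{2n+1}} < \frac{1.25}{\phi^{k/2}} + \frac{\phi+2}{\phi^{k/2}} < \frac{5}{\phi^{k/2}} < \frac{41}{\phi^{k/2}}.
\]
The margin here is large, so the constant $41$ is far from tight for $k \geq 50$.

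The remaining obstacle, and the reason the stated range is $k \geq 2$ rather than $k \geq 50$, is the finite band $2 \leq k \leq 49$, where Lemma \ref{lem 2.3} (and with it Lemma \ref{lem 2.2}) is unavailable; since $n$ may still be as large as $\phi^{k/2}$, this band cannot be dispatched by checking finitely many pairs $(k,n)$. Here I would reprove the estimate $(2\gamma-2)g_k(\gamma)\gamma^n \approx \frac{2\phi^{2n+1}}{\phi+2}$ by hand, using only the enclosure $\phi^2(1-\phi^{-k}) \leq \gamma \leq \phi^2$ together with the identity $g_k(\phi^2) = \frac{1}{\phi+2}$, which one checks directly from \eqref{eq 2.4} via $\phi^2 = \phi+1$ and $\phi^4 = 3\phi+2$. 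The decisive point is that $n < \phi^{k/2}$ tames the multiplicative error in $\gamma^n$: Bernoulli's inequality gives $(1-\phi^{-k})^n \geq 1 - n\phi^{-k} > 1 - \phi^{-k/2}$, so $\gamma^n$ equals $\phi^{2n}$ up to a relative error of size $\phi^{-k/2}$, while the factors $2\gamma-2$ and $g_k(\gamma)$ contribute further relative errors of the same order by continuity. Combining these with the $\theta$-term as before and maximizing the resulting crude constant over the finitely many values $2 \leq k \leq 49$ keeps $|\zeta|$ below $41/\phi^{k/2}$. I expect the genuine difficulty to lie entirely in this second step, specifically in obtaining an explicit, $k$-uniform bound on $|g_k(\gamma) - g_k(\phi^2)|$ for small $k$ without Lemma \ref{lem 2.2}(ii); once that is secured, the rest is routine factoring together with the two hypotheses $2n \geq k/2$ and $n < \phi^{k/2}$.
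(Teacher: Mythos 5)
Your argument for $k \geq 50$ is exactly the paper's intended derivation: the paper offers no written proof of Lemma \ref{lem 2.4} beyond the remark that it is ``a direct consequence of Lemma \ref{lem 2.3}'', and the consequence is precisely your computation --- write $Q_n^{(k)} = (2\gamma-2)g_k(\gamma)\gamma^n + \theta$ with $|\theta|<2$ from \eqref{eq 2.5}, substitute \eqref{eq 2.8}, and absorb $\theta$ using $2n+1 > k/2$, giving $|\zeta| < (1.25 + (\phi+2))/\phi^{k/2} < 5/\phi^{k/2}$. That part is correct, and you are right that $41$ is far from tight there. Where you go beyond the paper is the band $2 \leq k \leq 49$: the lemma is stated for $k \geq 2$ while Lemma \ref{lem 2.3} needs $k \geq 50$, and the paper simply ignores this mismatch (harmlessly, since the lemma is only ever invoked for $k \geq 550$ in Section 3.3). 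Your sketch for that band is a reasonable way to close the gap, but one factual claim in it is wrong: the band \emph{can} be dispatched by a finite check, since $n < \phi^{k/2} \leq \phi^{24.5} < 1.4 \times 10^{5}$ for $k \leq 49$, so there are only finitely many pairs $(k,n)$ to verify (and for most of that range $41/\phi^{k/2}$ is so large that the inequality is nearly vacuous). So: correct and essentially the paper's route on the range where the paper actually argues, plus an optional repair of a hypothesis-range sloppiness that the paper leaves unaddressed.
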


\subsection{Linear forms in logarithms}
Let $\gamma$ be an algebraic number of degree $d$ with a minimal primitive polynomial 
\[
f(Y):= b_0 Y^d+b_1 Y^{d-1}+ \cdots +b_d = b_0 \prod_{j=1}^{d}(Y- \gamma^{(j)}) \in \mathbb{Z}[Y],
\]
where the $b_j$'s are relatively prime integers, $b_0 >0$, and the $\gamma^{(j)}$'s are conjugates of $\gamma$. Then the \emph{logarithmic height} of $\gamma$ is given by
\[
h(\gamma)=\frac{1}{d}\left(\log b_0+\sum_{j=1}^{d}\log\left(\max\{|\gamma^{(j)}|,1\}\right)\right).
\]
With the above notation, Matveev (see  \cite{Matveev} or  \cite[Theorem~9.4]{Bugeaud}) proved the following result.

\begin{theorem}\label{thm2}
Let $\eta_1, \ldots, \eta_s$ be positive real algebraic numbers in a real algebraic number field $\mathbb{L}$ of degree $d_{\mathbb{L}}$. Let $a_1, \ldots, a_s$ be non-zero  integers such that
\[
\Lambda :=\eta_1^{a_1}\cdots\eta_s^{a_s}-1 \neq 0.
\]
Then
\[
- \log  |\Lambda| \leq 1.4\cdot 30^{s+3}\cdot s^{4.5}\cdot d_{\mathbb{L}}^2(1+\log d_{\mathbb{L}})(1+\log D)\cdot B_1 \cdots B_s,
\]
where
\[
D\geq \max\{|a_1|,\ldots,|a_s|\},
\]
and
\[
B_j\geq \max\{d_{\mathbb{L}}h(\eta_j),|\log \eta_j|, 0.16\}, ~ \text{for all} ~ j=1,\ldots,s.
\]
\end{theorem}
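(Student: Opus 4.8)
The statement is Matveev's explicit lower bound for linear forms in logarithms, a cornerstone of effective transcendence theory that is invoked rather than re-derived in the present work; any genuine proof would fill a monograph on its own. Nevertheless, the plan I would follow is the classical Baker method in the sharpened form that Matveev perfected. Writing $\Lambda' = a_1 \log \eta_1 + \cdots + a_s \log \eta_s$ for a fixed determination of the logarithms, so that $\Lambda = e^{\Lambda'} - 1$ and $|\Lambda'| \approx |\Lambda|$ whenever the form is small, the goal becomes a lower bound for $|\Lambda'|$. I would argue by contradiction: assume $|\Lambda'|$ is smaller than the asserted bound, and then derive an impossible inequality.

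The heart of the argument is the construction of an auxiliary object — either an auxiliary polynomial $P(z_1,\dots,z_s)$ with rational-integer coefficients, or, in Matveev's and Feldman's interpolation-determinant formulation, a determinant built from the monomials $\eta_1^{b_1}\cdots\eta_s^{b_s}$ evaluated at many integer points. Using Siegel's lemma, a pigeonhole bound over a lattice of exponent vectors, I would produce a nonzero integer combination whose coefficients have controlled logarithmic height, so that the associated function vanishes to high multiplicity along the direction of $\Lambda'$. The parameters — the degrees in each variable, the vanishing multiplicity, and the number of interpolation points — must be chosen in terms of $s$, $d_{\mathbb{L}}$, the heights $h(\eta_j)$ and the quantity $D$, and it is precisely the optimization of these parameters that produces the explicit constant $1.4\cdot 30^{s+3}\cdot s^{4.5}$ together with the exact shape of the factor $B_1\cdots B_s$.

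With the auxiliary function in hand, the argument proceeds by the familiar tension between analytic smallness and arithmetic size. On one side, the assumed smallness of $\Lambda'$, combined with Cauchy-type estimates and the high order of vanishing, forces the function (or determinant) to be analytically tiny at the relevant points. On the other side, its value there is a nonzero algebraic number, so by the fundamental inequality relating the modulus of a nonzero algebraic integer to the product of its conjugates (a Liouville-type size bound), it cannot drop below an explicit positive quantity. Comparing the two estimates yields the theorem once the parameters are balanced.

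The decisive and most delicate step — the place where Matveev's contribution lies and where the sharp dependence on $s$ is won — is the zero estimate, or equivalently the verification that the interpolation determinant does not vanish. One must show that the constructed auxiliary function cannot vanish to excessive order unless a nontrivial multiplicative relation already holds among the $\eta_j$, which is exactly the statement that $\Lambda \neq 0$ is being exploited. Turning this qualitative dichotomy into an effective multiplicity estimate with constants uniform in $s$ is the main obstacle, and it is what separates Matveev's bound from the earlier, weaker versions of Baker's theorem.
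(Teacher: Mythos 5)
This statement is Matveev's theorem, which the paper does not prove at all: it is quoted verbatim as an external result with a citation to Matveev's paper (and to Bugeaud--Mignotte--Siksek, Theorem 9.4), so there is no in-paper proof to compare against. Your decision to treat it as an invoked black box is therefore exactly what the paper does, and your survey-level outline of the Baker--Matveev method (auxiliary construction via Siegel's lemma or interpolation determinants, the analytic-smallness versus Liouville-size tension, and the zero estimate as the crux) is a fair account of how the result is established in the literature, though of course it is a roadmap rather than a checkable proof.
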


\subsection{Reduction method}
Here we present the following result due to Dujella and Peth\"{o} \cite[Lemma~5 (a)]{Dujella} which is a generalization of a result of Baker and Davenport's result \cite{Baker}.
\begin{lemma}\label{lem 2.6}
Let $\widehat{\tau}$ be an irrational number, and let $A,C,\widehat{\mu}$ be some real numbers with $A>0$ and $C>1$. Assume that $M$ is a positive integer, and let $p/q$ be a convergent of the continued fraction of the irrational $\widehat{\tau}$ such that $q > 6M$. Put \[\epsilon:=||\widehat{\mu} q||-M||\widehat{\tau} q||,
\]
where $||\cdot||$ denotes the distance from the nearest integer.  If $\epsilon >0$, then there is no solution to the inequality 
\[
0< |r \widehat{\tau}-s+\widehat{\mu}| <AC^{-t},
\]
in positive integers $r$, $s$ and $t$ with
\[
r \leq M \quad\text{and}\quad t \geq \frac{\log(Aq/\epsilon)}{\log C}.
\]
\end{lemma}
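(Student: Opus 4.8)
The plan is to argue by contradiction, exploiting the defining approximation property of a continued-fraction convergent. Suppose, contrary to the claim, that there exist positive integers $r\le M$, $s$, and $t\ge \log(Aq/\epsilon)/\log C$ satisfying $0<|r\widehat{\tau}-s+\widehat{\mu}|<AC^{-t}$. Writing $x:=r\widehat{\tau}-s+\widehat{\mu}$, the strategy is to extract a lower bound $|x|\ge \epsilon/q$ using only the convergent $p/q$, and then to observe that this lower bound is incompatible with the upper bound $AC^{-t}$ once $t$ exceeds the stated threshold.

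First I would multiply through by $q$ and peel off the integer part. Since $p/q$ is a convergent, $p$ is the nearest integer to $q\widehat{\tau}$, so that $\|q\widehat{\tau}\|=|q\widehat{\tau}-p|$; this is where the hypothesis $q>6M$ enters, as it forces $|q\widehat{\tau}-p|<1/q<1/2$ and hence guarantees that $p$ really is the closest integer. Writing $q\widehat{\tau}=p+(q\widehat{\tau}-p)$, one obtains
\[
qx=(rp-sq)+\widehat{\mu}q+r(q\widehat{\tau}-p).
\]
Here $rp-sq$ is an integer, so $\widehat{\mu}q$ differs from $qx-r(q\widehat{\tau}-p)$ by an integer. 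Consequently the distance from $\widehat{\mu}q$ to the nearest integer is at most $|qx-r(q\widehat{\tau}-p)|$, which gives
\[
\|\widehat{\mu}q\|\le |qx-r(q\widehat{\tau}-p)|\le q|x|+r\,|q\widehat{\tau}-p|\le q|x|+M\|q\widehat{\tau}\|.
\]

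Rearranging and inserting the definition $\epsilon=\|\widehat{\mu}q\|-M\|q\widehat{\tau}\|$ yields $q|x|\ge \epsilon$, i.e. $|x|\ge \epsilon/q$; here the hypothesis $\epsilon>0$ is exactly what makes this a genuine positive lower bound. Combining it with the assumed upper bound gives $\epsilon/q\le |x|<AC^{-t}$, hence $C^{t}<Aq/\epsilon$, and since $C>1$ this is equivalent to $t<\log(Aq/\epsilon)/\log C$, contradicting the hypothesis $t\ge \log(Aq/\epsilon)/\log C$. This contradiction completes the argument.

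I expect the only delicate point to be the identification $\|q\widehat{\tau}\|=|q\widehat{\tau}-p|$ together with the clean bookkeeping of integer parts: one must verify that $p$ is indeed the nearest integer to $q\widehat{\tau}$ (via the best-approximation property of convergents and $q>6M$) and that the integer $rp-sq$ may legitimately be discarded when passing to $\|\widehat{\mu}q\|$. Everything else reduces to the triangle inequality and a monotone rearrangement of the final inequality, and the positivity conditions $A>0$, $C>1$, and $\epsilon>0$ are precisely what keep each algebraic step valid and the concluding comparison meaningful.
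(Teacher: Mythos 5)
Your proof is correct, and in fact it is the canonical argument: the paper itself does not prove this lemma at all --- it quotes it verbatim from Dujella and Peth\"{o} (their Lemma 5(a)) --- and your chain of steps (multiply by $q$, write $q\widehat{\tau}=p+(q\widehat{\tau}-p)$, discard the integer $rp-sq$ when passing to $\|\widehat{\mu}q\|$, apply the triangle inequality to get $q|x|\geq \|\widehat{\mu}q\|-M\|q\widehat{\tau}\|=\epsilon$, and compare $\epsilon/q$ with $AC^{-t}$) is precisely the proof given in that source.

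One small correction to your commentary: the inequality $|q\widehat{\tau}-p|<1/q$ is not a consequence of the hypothesis $q>6M$; it holds for \emph{every} convergent $p/q$ of an irrational number. What $q>6M$ contributes to your step is only the triviality $1/q<1/2$ (so that $p$ is indeed the nearest integer to $q\widehat{\tau}$), for which $q\geq 3$ would already suffice. The real purpose of $q>6M$ in the lemma is practical rather than logical: it forces $M\|q\widehat{\tau}\|<M/q<1/6$, so that the quantity $\epsilon=\|\widehat{\mu}q\|-M\|q\widehat{\tau}\|$ has a reasonable chance of being positive, which is what makes the reduction method usable. This misattribution does not affect the validity of your argument.
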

\subsection{Useful Lemmas}
We conclude this section by recalling two results that we will need in this work.

\begin{lemma}\label{lem 2.7} {\rm{(\cite{Weger}, Lemma 2.2)}}
Let $a, x \in \mathbb{R}$. If $0< a < 1$ and $|x| < a$, then 
\[
|\log(1+x)| < \frac{-\log(1-a)}{a}\cdot |x|
\]
and 
\[
|x| < \frac{a}{1 - e^{-a}} \cdot |e^{x} - 1|.
\]
\end{lemma}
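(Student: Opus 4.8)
The plan is to prove the two inequalities separately, in each case reducing the claim to the strict monotonicity of a single explicit real function of one variable; the stated constants $\frac{-\log(1-a)}{a}$ and $\frac{a}{1-e^{-a}}$ will then appear simply as the value of that function at the relevant endpoint ($a$, respectively $-a$). Throughout I assume $x\neq 0$, since for $x=0$ both sides of each displayed inequality vanish and the strict inequalities are understood to exclude that degenerate case.

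\textbf{First inequality.} For the bound on $|\log(1+x)|$ I would start from the Maclaurin series $\log(1+x)=\sum_{n\ge1}(-1)^{n-1}x^n/n$, valid for $|x|<1$, which gives the comparison $|\log(1+x)|\le\sum_{n\ge1}|x|^n/n=-\log(1-|x|)$ (the middle sum is precisely the series of $-\log(1-t)$ evaluated at $t=|x|$). It then suffices to show that $u(t):=-\log(1-t)/t$ is strictly increasing on $(0,1)$; this is immediate from its own power series $u(t)=\sum_{n\ge1}t^{n-1}/n=1+t/2+t^2/3+\cdots$, whose terms are nonnegative and, for $n\ge2$, strictly increasing in $t$. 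Hence $|x|<a$ forces $u(|x|)<u(a)=\frac{-\log(1-a)}{a}$, and combining the two steps yields $|\log(1+x)|\le -\log(1-|x|)=|x|\,u(|x|)<|x|\cdot\frac{-\log(1-a)}{a}$, as required. Equivalently one can bound $|\log(1+x)|=\bigl|\int_0^x\frac{dt}{1+t}\bigr|\le\int_0^{|x|}\frac{dt}{1-t}$ and finish with the same monotonicity argument.

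\textbf{Second inequality.} The key observation here is that, for every $x\neq0$, the ratio $|x|/|e^{x}-1|$ equals $\psi(x):=x/(e^{x}-1)$ irrespective of the sign of $x$: when $x>0$ both $x$ and $e^x-1$ are positive, while when $x<0$ one has $|x|=-x$ and $|e^x-1|=1-e^x$, and $(-x)/(1-e^x)=x/(e^x-1)$. I would then show that $\psi$ (extended continuously by $\psi(0)=1$) is strictly decreasing on $\mathbb{R}$. Differentiating gives $\psi'(x)=\bigl(e^x(1-x)-1\bigr)/(e^x-1)^2$, and the numerator $N(x):=e^x(1-x)-1$ satisfies $N(0)=0$ and $N'(x)=-x e^x$, so $N$ increases on $(-\infty,0)$ and decreases on $(0,\infty)$; thus $N(x)\le N(0)=0$ with equality only at $x=0$, whence $\psi'<0$ off $0$ and $\psi$ is strictly decreasing. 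Since $|x|<a$ means $x>-a$, monotonicity gives $\psi(x)<\psi(-a)$, and a direct computation yields $\psi(-a)=\frac{-a}{e^{-a}-1}=\frac{a}{1-e^{-a}}$. Therefore $|x|/|e^x-1|=\psi(x)<\frac{a}{1-e^{-a}}$, which rearranges to the claimed inequality $|x|<\frac{a}{1-e^{-a}}|e^x-1|$.

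This lemma is elementary, so there is no single hard step; the points requiring genuine care are the absolute-value bookkeeping in the second inequality, namely verifying that $|x|/|e^x-1|=x/(e^x-1)$ uniformly in the sign of $x$, and the verification that the two auxiliary functions $u$ and $\psi$ are indeed strictly monotone, which is exactly what pins the sharp constants to the endpoints $a$ and $-a$. I would also record explicitly that both inequalities are strict precisely because $|x|<a$ is strict (so $u(|x|)<u(a)$ and, via $x>-a$, $\psi(x)<\psi(-a)$ are strict) together with the standing assumption $x\neq0$.
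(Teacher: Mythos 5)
Your proof is correct, but note that the paper itself offers no argument to compare against: Lemma \ref{lem 2.7} is simply quoted from de Weger's monograph (\cite{Weger}, Lemma 2.2) as a known auxiliary result, so you have supplied a proof where the authors supply only a citation. Your argument is a clean, self-contained one: for the first inequality, the chain $|\log(1+x)|\le -\log(1-|x|)=|x|\,u(|x|)<|x|\,u(a)$ via the termwise-increasing power series of $u(t)=-\log(1-t)/t$ is exactly the right mechanism, and for the second, the reduction $|x|/|e^x-1|=\psi(x)$ with $\psi(x)=x/(e^x-1)$ strictly decreasing (via the sign of $N(x)=e^x(1-x)-1$) and the evaluation $\psi(-a)=a/(1-e^{-a})$ pins the constant correctly; this is essentially the standard proof one finds in the literature. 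Your handling of the degenerate case $x=0$ is also a genuine point of care: as literally stated the strict inequalities fail there (both sides vanish), so the lemma implicitly assumes $x\neq 0$ — which is harmless in the paper's applications, since the lemma is only ever invoked on the nonzero quantities $\Lambda_i=e^{\Gamma_i}-1$, which the authors separately prove are nonzero before applying it.
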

\begin{lemma}\label{lem 2.8} {\rm{(\cite{Sanchez}, Lemma 7)}}
If $m \geq 1$, $S \geq (4m^{2})^{m}$ and $\frac{x}{(\log x)^{m}} < S$, then $x < 2^{m} S (\log S)^{m}$.
\end{lemma}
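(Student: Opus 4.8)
The plan is to argue by contradiction, exploiting the monotonicity of the auxiliary function $g(x) := x/(\log x)^m$. First I would record that since $m \geq 1$ we have $S \geq (4m^2)^m \geq 4 > 1$, so $\log S > 0$ and the claimed right-hand side $2^m S (\log S)^m$ is positive; hence if $x \leq 1$ the conclusion is immediate, and we may assume $x > 1$. A direct computation gives $g'(x) = (\log x)^{-m-1}(\log x - m)$, so that $g$ is strictly increasing on $(e^m, \infty)$. Note the hypothesis reads exactly $g(x) < S$.

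Next I would set $y := 2^m S (\log S)^m$ and suppose, for contradiction, that $x \geq y$. The idea is to show $g(y) \geq S$, since then monotonicity will force $g(x) \geq g(y) \geq S$, contradicting $g(x) < S$. To invoke monotonicity I first check that $y$ lies in the increasing regime: from $\log y = m\log 2 + \log S + m\log\log S$ together with $\log S \geq 2m\log(2m) > m$ (using $S \geq (4m^2)^m$) one gets $\log y > \log S > m$, so $y > e^m$ and therefore $x \geq y > e^m$, as required.

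It then remains to verify $g(y) \geq S$. Writing $g(y) = 2^m S (\log S)^m/(\log y)^m$, this inequality is equivalent (all quantities being positive) to $2\log S \geq \log y$, that is, to
\[
\log S \geq m\log 2 + m\log\log S = m\log(2\log S).
\]
To establish this I would put $L := \log S$ and $h(L) := L - m\log(2L)$; since $h'(L) = 1 - m/L > 0$ for $L > m$, and the hypothesis gives $L \geq 2m\log(2m) > m$, it suffices to check $h\bigl(2m\log(2m)\bigr) \geq 0$. A short simplification reduces this to $\log m \geq \log\log(2m)$, i.e.\ to $m \geq \log(2m)$, which holds for all $m \geq 1$ (it is true at $m = 1$, and $m - \log(2m)$ is nondecreasing there since its derivative is $1 - 1/m \geq 0$).

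The main obstacle is exactly this last inequality chain. The factor $2^m$ in the claimed bound provides precisely the slack needed so that $g(y) \geq S$ collapses to $\log S \geq m\log(2\log S)$, and the hypothesis $S \geq (4m^2)^m$ is calibrated to supply $\log S \geq 2m\log(2m)$; I expect this to be the tight step. Everything else — the monotonicity of $g$ and the check that $y > e^m$ — is routine once these two facts are secured.
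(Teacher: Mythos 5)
Your proof is correct. Note that the paper itself gives no argument for this lemma --- it is quoted verbatim from Lemma~7 of the cited reference of G\'omez S\'anchez and Luca --- so the only comparison available is with that source, and your argument (contradiction via monotonicity of $x/(\log x)^m$ on $(e^m,\infty)$, with the hypothesis $\log S \geq 2m\log(2m)$ reducing the key inequality $\log S \geq m\log(2\log S)$ to $m \geq \log(2m)$) is essentially the standard proof given there. All the steps check out: the derivative computation $g'(x)=(\log x)^{-m-1}(\log x - m)$, the verification $y > e^m$ so that monotonicity applies under the contradiction hypothesis $x \geq y$, and the algebraic simplification $h\bigl(2m\log(2m)\bigr) = m\bigl(\log m - \log\log(2m)\bigr) \geq 0$ are all accurate.
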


\section{Proof of Theorem \ref{thm1}}
First, observe that for $1 \leq n \leq k+1$,  we have  $Q_{n}^{(k)} = 2 F_{2n}$ (see $\cite[Lemma~10]{Siar}$) where $F_{m}$ is the $m$th Fibonacci number. Therefore \eqref{eq 1.2} becomes 
\begin{equation*}
  2 F_{2n} = a \left ( \frac{10^{l}-1}{9} \right) \cdot 10^{m} + b \left ( \frac{10^{m}-1}{9} \right).
\end{equation*}
So by using \textit{Mathematica}, we found that the solutions of the above equation satisfy for $n \in \{ 1,2,3,4,5,6 \}$. Hence, in the sequel, we suppose that $n \geq k+2$ and $k \geq 2$.

\subsection{An initial relation between $n$ and $l+m$}
In view of equation \eqref{eq 1.2}, we have
\begin{equation}\label{eq 3.12}
10^{l+m-1} < a \left ( \frac{10^{l}-1}{9} \right) \cdot 10^{m} + b \left (\frac{10^{m}-1}{9} \right) < 10^{l+m}.    
\end{equation}
Indeed, it follows from \eqref{eq 2.6} and \eqref{eq 3.12} that
\[
10^{l+m-1} < a \left ( \frac{10^{l}-1}{9} \right) \cdot 10^{m} + b \left ( \frac{10^{m}-1}{9} \right) =  Q_{n}^{(k)} < 2 \gamma^{n}
\]
and 
\[
\gamma^{n-1} < Q_{n}^{(k)} =  a \left ( \frac{10^{l}-1}{9} \right) \cdot 10^{m} + b \left ( \frac{10^{m}-1}{9} \right) < 10^{l+m} .
\]
Thus, taking the logarithm on both sides of the above inequalities, we obtain
\[
\left(l + m-1\right)\frac{\log 10}{\log \gamma} - \frac{\log 2}{\log \gamma} < n  < \left( l + m\right) \frac{\log 10}{\log \gamma}+ 1.
\]
Furthermore, since $2 < \gamma < \phi^{2} < 3$, it follows from the above that
\begin{equation}\label{eq 3.13}
    2(l + m)-2.6 < n < 3.4(l + m)+1.
\end{equation}
\subsection{Upper bounds for $n$ in terms of $k$}
Now, rearranging the equation \eqref{eq 1.2} as 
\[
9 \cdot  Q_{n}^{(k)}  = a(10^{l} -1)\cdot 10^{m} + b(10^{m}-1).
\]
Using inequality \eqref{eq 2.5} and taking absolute values on both sides, we get
\begin{align} \label{eq 3.14}
 \left| 9 (2 \gamma -2) g_{k}(\gamma) \gamma^{n} - a \cdot 10^{l+m} \right| & = \left| - 10^{m}(a-b) - b - 9 Q_{n}^{(k)}  + 9(2\gamma - 2)g_{k}(\gamma) \gamma^{n} \right| \nonumber \\ 
 & \leq 10^{m}(a-b) + b + 9 \cdot 2 \nonumber \\ 
 &  \leq 9 \cdot 10^{m} + 27  \nonumber \\ 
 & <  11.8 \cdot 10^{m} 
\end{align}
for $m \geq 1$. Dividing both sides of the inequality \eqref{eq 3.14} by $a \cdot 10^{l+m}$, we obtain \begin{equation}\label{eq 3.15}
  \left|10^{-(l+m)} \gamma^{n} \frac{ 9 (2 \gamma -2) g_{k}(\gamma)}{a}  - 1 \right|  < \frac{11.8 \cdot 10^{m}}{a 10^{l+m}} < \frac{11.8}{10^{l}}.
\end{equation}
Let
\begin{equation} \label{eq 3.16}
    \Lambda_{1} := 10^{-(l+m)} \gamma^{n} \frac{ 9 (2 \gamma -2) g_{k}(\gamma)}{a}  - 1.
\end{equation}
If $\Lambda_{1} = 0$, then we would get 
\[
\frac{a \cdot 10^{l+m}}{9} = (2\gamma - 2)g_{k}(\gamma) \gamma^{n} =   \frac{2(\gamma-1)^{2}}{(k+1)\gamma^2 - 3k\gamma + k -1} \gamma^{n}.
\]
Conjugating the above relation by some automorphism of the Galois group of the splitting field of  $ \Phi_{k}(x)$ over $\mathbb{Q}$ and taking absolute values, we get
\[
\frac{a \cdot 10^{l+m}}{9} = (2\gamma - 2)g_{k}(\gamma) \gamma^{n} =  \left| \frac{2(\gamma_{i}-1)^{2}}{(k+1)\gamma_{i}^2 - 3k\gamma_{i} + k -1} \gamma_{i}^{n} \right|
\]
for some $i > 1$, where $\gamma_{i}$  represents the roots of the characteristic polynomial $\Phi_{k}(x)$. Using the facts $|\gamma_{i}| < 1$ and $|g_{k}(\gamma_{i})|<1$, we can write 
\[
\frac{a \cdot 10^{l+m}}{9} = 2| \gamma_{i}-1| \left| \frac{(\gamma_{i}-1)}{(k+1)\gamma_{i}^2 - 3k\gamma_{i} + k -1} \right| |\gamma_{i}^{n}| < 4,
\]
which is impossible as $l + m \geq 2$. Hence $\Lambda_{1} \neq 0$. To apply Theorem \ref{thm2} to $\Lambda_{1}$ given by \eqref{eq 3.16}, we set:
\[
\eta_{1} := 10 , \quad \eta_{2} := \gamma , \quad \eta_{3} := \frac{9(2 \gamma-2)g_{k}(\gamma)}{a},
\]
and
\[ a_{1}:= -(l+m), \quad a_{2}:= n, \quad a_{3}:= 1.
\]
Note that the algebraic numbers $\eta_{1}, \eta_{2}, \eta_{3}$ belongs to the field  $\mathbb{L} := \mathbb{Q}(\gamma)$ and $d_{\mathbb{L}} = [\mathbb{L}:\mathbb{Q}] \leq k$. Since $h(\eta_{1}) = \log 10 $ and  $h(\eta_{2}) = (\log \gamma) / k < ( \log 3)/k$, it follows that 
\[
\max\{kh(\eta_{1}),|\log \eta_{1}|,0.16\} = k\log 10  := B_{1}
\]
and 
\[
\max\{kh(\eta_{2}),|\log \eta_{2}|,0.16\} =  \log 3 := B_{2}.
\]
Therefore, by the estimate \eqref{eq 2.7} and the properties of logarithmic height, it follows that 
\begin{align*}
    h(\eta_{3}) & = h\left( \frac{9 (2\gamma - 2) g_{k}(\gamma)}{a}\right)\\ 
    & \leq h\left(\frac{9}{a}\right) + h(2\gamma - 2) + h(g_{k}(\gamma)) \\
    &< \log 9 + \log 4 + 5 \log (k) \\
    &< 10.2  \log k \ \text{for all} \ k \geq 2.
\end{align*}
Thus, we obtain
\[
\max\{kh(\eta_{3}),|\log \eta_{3}|,0.16\} = 10.2 k \log k := B_{3}.
\]
Since $2(l + m)-2.6 < n$. Therefore, we can take $D:= n$. Now by Theorem \ref{thm2}, we have
\begin{equation}\label{eq 3.17}
- \log |\Lambda_{1}| < 1.432 \times 10^{11} k^{2} (1+ \log k) (1 +\log n) (k \log 10)(\log 3)(10.2 k \log k) .    \end{equation}
The comparison of the lower bound \eqref{eq 3.17} and upper bound \eqref{eq 3.15} of $\Lambda_{1}$ gives us
\[
l \log 10 - \log 11.8 < 3.7 \times 10^{12} k^{4} \log k (1 + \log k)(1+ \log n).
\]
Using the facts $1+\log k < 2.5 \log k$ for all $k \geq 2$ and $1 + \log n < 1.8 \log n$ for $n \geq 4$, we can conclude that \begin{equation}\label{eq 3.18}
 l \log 10 < 1.68 \times 10^{13} k^{4} \log^{2} k \log n.
 \end{equation}
Now rearranging the equation \eqref{eq 1.2} as
\[
(2\gamma - 2) g_{k}(\gamma) \gamma^{n} - 10^{m} \left( \frac{a \cdot 10^{l} - a +b}{9} \right) = \frac{-b}{9} - \left( Q_{n}^{(k)} - (2\gamma - 2) g_{k}(\gamma) \gamma^{n} \right)
\]
and taking absolute values on both sides, we get
\[
\left| (2\gamma - 2) g_{k}(\gamma) \gamma^{n} - 10^{m} \left( \frac{a \cdot 10^{l} -a +b}{9} \right) \right| < 3.
\]
Dividing both sides of the above inequality by $(2\gamma - 2) g_{k}(\gamma) \gamma^{n}$ and using Lemma \ref{lem 2.1}, yields
\begin{equation}\label{eq 3.19}
    \left| 10^{m} \gamma^{-n} \left( \frac{a \cdot 10^{l} -a +b}{9 (2\gamma - 2) g_{k}(\gamma)} \right) - 1 \right| < \frac{3}{(2\gamma - 2) g_{k}(\gamma) \gamma^{n}} < \frac{5.5}{\gamma^{n}}.
\end{equation}
Let 
\begin{equation}\label{eq 3.20}
\Lambda_{2}:= 10^{m} \gamma^{-n} \left( \frac{a \cdot 10^{l} -a +b}{9(2\gamma - 2) g_{k}(\gamma)} \right) - 1.    
\end{equation}
Suppose that $\Lambda_{2}= 0$, then we get
\[
(2 \gamma -2)g_{k}(\gamma) \gamma^{n} = 10^{m} \cdot \left( \frac{a \cdot 10^{l} -a +b}{9} \right).
\]
This implies that $g_{k}(\gamma)$ is an algebraic integer, which is a contradiction. Thus, $\Lambda_{2} \neq 0$. We take $s := 3$,
\[
 \eta_{1}:= 10, \quad \eta_{2}:= \gamma, \quad \eta_{3}:= \frac{a \cdot 10^{l} -a +b}{9(2\gamma - 2) g_{k}(\gamma)} ,
\]
and
\[ 
a_{1}:= m, \quad a_{2}:= -n, \quad a_{3}:= 1.
\]
Note that  $\mathbb{L} := \mathbb{Q}(\gamma)$ contains $\eta_{1}, \eta_{2}, \eta_{3}$ and has $d_{\mathbb{L}}:= k$. Since $m < 2(l + m)-2.6 < n$, we deduce that $D:= \max\{|a_{1}|,|a_{2}|,|a_{3}|\}= n$. Since the logarithmic heights for $\eta_{1}$ and $\eta_{2}$ calculated as before are $h(\eta_{1}) = \log 10$ and $h(\eta_{2}) = (\log \gamma) / k < (\log 3)/k$. Therefore, we may take $ B_{1} := k \log 10$ and $B_{2} := \log 3$. By using \eqref{eq 2.7} and \eqref{eq 3.18}, we have
\begin{align*}
    h(\eta_{1}) &= h\left( \frac{a \cdot 10^{l} -a +b}{9(2\gamma - 2) g_{k}(\gamma)} \right) \\
    & \leq h(a)+ h(10^{l}) + h(a) + h(b) + \log 2 + h(9) + h(2\gamma - 2) + h(g_{k}(\gamma))  \\
    & < 4 \log 9 + l \log 10 + \log 2 + \log 4 + 5 \log k \\
    & < 4 \log 9 + 3 \log 2 + 1.68 \times 10^{13} k^{4} \log^{2} k \log n  + 5 \log k \\
    & < 1.69 \times 10^{13} k^{4} \log^{2} k \log n.
\end{align*}
As a result, we can take $B_{3} := 1.69 \times 10^{13} k^{5} \log^{2} k \log n.$ Consequently, by using Theorem \ref{thm2} with the facts $1+\log k < 2.5 \log k$ for $k \geq 2$ and $1 + \log n < 1.8 \log n$ for $n \geq 4$ to the inequality \eqref{eq 3.20}, we obtain that
\[
- \log |\Lambda_{2}| < 2.76 \times 10^{25} k^{8} \log^{3} k  \log^{2} n.
\]
Comparing the above inequality with \eqref{eq 3.19}, we get
\[
n < 4 \times 10^{25} k^{8} \log^{3} k  \log^{2} n,
\]
which leads to
\begin{equation}\label{eq 3.21}
\frac{n}{(\log n)^{2}} < 4 \times 10^{25} k^{8} \log^{3} k.    
\end{equation}
Thus, putting $S := 4 \times 10^{25} k^{8} \log^{3} k$ in \eqref{eq 3.21} and using Lemma \ref{lem 2.8}, the above inequality yields 
\begin{align*}
n & < 4(4 \times 10^{25} k^{8} \log^{3} k) (\log (4 \times 10^{25} k^{8} \log^{3} k))^{2} \\
& < (1.6 \times 10^{26} k^{8} \log^{3} k) (58.95 + 8 \log k + 3 \log (\log k))^{2} \\
& < (1.6 \times 10^{26} k^{8} \log^{3} k) (91.5 \log k) ^{2} \\
& < 1.34 \times 10^{30} k^{8} \log^{5} k,    
\end{align*}
where we have used the fact that $58.95 + 8 \log k + 3 \log (\log k) < 91.5 \log k $ for $k \geq 2$. \\
The result established in this subsection is summarized in the following lemma.
\begin{lemma}\label{lem3.1}
All the solutions of equation \eqref{eq 1.2} satisfy
\begin{equation}\label{eq 3.22}
 n < 1.34 \times 10^{30} k^{8} \log^{5} k.
\end{equation}
\end{lemma}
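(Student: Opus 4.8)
The plan is to bound $n$ through two successive applications of Matveev's theorem (Theorem~\ref{thm2}), separated by a step that first pins down the digit-length $l$ of the larger repdigit block. Throughout I would replace $Q_{n}^{(k)}$ by its dominant term $(2\gamma-2)g_{k}(\gamma)\gamma^{n}$, the error being at most $2$ by \eqref{eq 2.5}, so that all of the arithmetic is carried by the single algebraic power $\gamma^{n}$.

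For the first linear form, I would rearrange \eqref{eq 1.2} so that the leading term $a\cdot 10^{l+m}$ is isolated, producing $\Lambda_{1}:=10^{-(l+m)}\gamma^{n}\,\frac{9(2\gamma-2)g_{k}(\gamma)}{a}-1$ with $|\Lambda_{1}|<11.8/10^{l}$. Nonvanishing of $\Lambda_{1}$ follows by conjugating with a Galois automorphism of the splitting field of $\Phi_{k}(x)$: a vanishing would force $a\cdot 10^{l+m}/9$ to equal a quantity that, after conjugation, is bounded by $4$ because the non-dominant roots satisfy $|\gamma_{i}|<1$ and $|g_{k}(\gamma_{i})|<1$ by Lemma~\ref{lem 2.1}, contradicting $l+m\geq 2$. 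Applying Theorem~\ref{thm2} with $\eta_{1}=10$, $\eta_{2}=\gamma$, $\eta_{3}=9(2\gamma-2)g_{k}(\gamma)/a$, and using the height estimate $h(g_{k}(\gamma))<5\log k$ from \eqref{eq 2.7}, yields a lower bound for $-\log|\Lambda_{1}|$; comparing it with the upper bound $11.8/10^{l}$ produces the key intermediate inequality \eqref{eq 3.18}, a bound on $l\log 10$ in terms of $k$ and $n$.

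The second linear form uses a different grouping, isolating $10^{m}$ times the factor $(a\cdot 10^{l}-a+b)/9$, so that $\Lambda_{2}:=10^{m}\gamma^{-n}\frac{a\cdot 10^{l}-a+b}{9(2\gamma-2)g_{k}(\gamma)}-1$ with $|\Lambda_{2}|<5.5/\gamma^{n}$. Here $\Lambda_{2}\neq 0$, for otherwise $g_{k}(\gamma)$ would be an algebraic integer. The decisive point is that the height of the new third entry now carries a term $l\log 10$, which I control precisely by substituting \eqref{eq 3.18}; this is exactly what renders the second Matveev application effective. Comparing its output with $5.5/\gamma^{n}$ and simplifying gives an inequality of the shape $n/(\log n)^{2}<4\times 10^{25}k^{8}\log^{3} k$.

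Finally I would invert this transcendental inequality: setting $S=4\times 10^{25}k^{8}\log^{3} k$ and invoking Lemma~\ref{lem 2.8} with $m=2$ (which converts $x/(\log x)^{m}<S$ into $x<2^{m}S(\log S)^{m}$) turns the bound on $n/(\log n)^{2}$ into the explicit estimate $n<1.34\times 10^{30}k^{8}\log^{5} k$, after bounding $58.95+8\log k+3\log\log k<91.5\log k$. The main obstacle is the interdependence of the two stages: the first linear form alone cannot bound $n$, because $l$ may be comparable to $n$, so one must first extract a bound on $l$ and feed it into the height of $\eta_{3}$ in the second form. The remaining difficulty is purely the careful bookkeeping of heights and constants, in particular propagating $h(g_{k}(\gamma))<5\log k$ cleanly through both linear forms.
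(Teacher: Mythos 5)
Your proposal follows essentially the same route as the paper: the same two linear forms $\Lambda_{1}$ and $\Lambda_{2}$ with identical groupings, the same nonvanishing arguments (Galois conjugation for $\Lambda_{1}$, the algebraic-integer contradiction for $\Lambda_{2}$), the same use of \eqref{eq 3.18} to control the height of the third entry in the second form, and the same final inversion via Lemma~\ref{lem 2.8} with $m=2$. The argument and all key constants match the paper's proof.
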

\subsection{An absolute upper bound on $n$}
For $k \geq 550$, the following inequalities hold
\[
 n < 1.34 \times 10^{30} k^{8} \log^{5} < \phi^{k/2}.
\]
Thus, from Lemma \ref{lem 2.4}, we have
\begin{equation}\label{eq 3.23}
Q_{n}^{(k)} = \frac{2\phi^{2n+1}}{\phi +2} (1 + \zeta ), \quad |\zeta| < \frac{41}{\phi^{k/2}}. \end{equation}
Since $k \geq 550$, therefore it is seen that $1 + \zeta \in \left( \frac{1}{2}, \frac{3}{2}
\right)$. Hence, by using \eqref{eq 3.12}, we get
\begin{equation}\label{eq 3.24}
 \frac{2\phi^{2n+1}}{\phi +2} > \frac{Q_{n}^{(k)}}{3/2} > \frac{2 \cdot 10^{l+m-1}}{3} = \frac{10^{l+m}}{15}.
\end{equation}
Inserting \eqref{eq 3.23} in \eqref{eq 1.2}, we obtain
\[
\frac{2\phi^{2n+1}}{\phi +2} (1 + \zeta ) = a \left ( \frac{10^{l}-1}{9} \right) \cdot 10^{m} + b \left ( \frac{10^{m}-1}{9} \right),
\]
which can be arranged by taking absolute values as 
\begin{align*}
\left|\frac{2\phi^{2n+1}}{\phi +2} -\frac{a}{9}10^{l+m}\right| & \leq  \frac{2\phi^{2n+1}}{\phi +2}|\zeta| + \left|\frac{10^{m}(b-a) - b}{9} \right| \\
& < \frac{82\phi^{2n+1}}{(\phi +2)\phi^{k/2}} + 1.1 \cdot 10^{m}.
\end{align*}
Dividing both sides of the above inequality by $\frac{2\phi^{2n+1}}{\phi +2}$ and using \eqref{eq 3.24}, we have
\begin{align}\label{eq 3.25}
\left|\frac{a(\phi +2)}{18}\phi^{-(2n+1)}10^{l+m} -1 \right| & < \frac{41}{\phi^{k/2}} + \frac{1.1 \cdot (\phi +2) \cdot 10^{m}}{2\phi^{2n+1}} \nonumber \\
& < \frac{41}{\phi^{k/2}} + \frac{1.1 \cdot 15 \cdot 10^{m}}{10^{l+m}} \nonumber \\
& < \frac{41}{\phi^{k/2}} + \frac{16.5}{10^{l}} \nonumber \\
& < 57.5 \cdot \max \left\{ \frac{1}{\phi^{k/2}}, \frac{1}{10^{l}} \right\}.  
\end{align}    
To apply Theorem \ref{thm2} in \eqref{eq 3.25}, set 
\[
\Lambda_{3} := \frac{a(\phi +2)}{18}\phi^{-(2n+1)}10^{l+m} - 1.
\]
If  $\Lambda_{3} = 0$, then 
\[
\frac{2\phi^{2n+1}}{\phi +2} = \frac{a}{9}10^{l+m},
\]
which is impossible because the right-hand side is rational whereas it can be seen that the left-hand side is irrational.
Therefore, $\Lambda_{3} \neq 0$. We take $s := 3$,
\[
 \eta_{1}:= \frac{a(\phi+2)}{18} , \quad  \eta_{2}:= \phi, \quad  \eta_{3}:= 10,
\]
and
\[ 
a_{1}:= 1, \quad a_{2}:= -(2n+1), \quad a_{3}:= l+m.
\]
Note that $\mathbb{L} := \mathbb{Q}(\sqrt{5})$ contains $\eta_{1}, \eta_{2}, \eta_{3}$ and has $d_{\mathbb{L}}:= 2$. Since $l+ m < 2n+1$, we deduce that $D:= \max\{|a_{1}|,|a_{2}|,|a_{3}|\}= 2n+1$. Moreover, since $h(\eta_{2}) = \frac{\log \phi}{2}$, $h(\eta_{3}) = \log 10$ and
\begin{align*}
 h(\eta_{1}) & \leq  h\left(\frac{a}{18}\right) + h(\phi)+ h(2) + \log 2 \\
 & \leq  h\left(\frac{a}{9}\right) + h(2) + h(\phi)+ h(2) + \log 2 \\
 & \leq  \log 9 +3 \log 2 + \frac{\log \phi}{2}  \\
 & \leq \log 72 + \frac{\log \phi}{2}.
 \end{align*}
Therefore, we may take
\[
B_{1} := \log \left( 72^{2} \cdot \phi  \right), \quad B_{2} := \log \phi \quad \text{and} \quad B_{3} := 2 \log 10.
\]
As before, applying Theorem \ref{thm2}, we have 
\begin{equation}\label{eq 3.26}
- \log |\Lambda_{3}| < 4.67 \times 10^{13} \log n,   
\end{equation}
where $1 + \log (2n+1) < 2.4 \log n $ holds for all $n \geq 4$. When we compare \eqref{eq 3.26} with \eqref{eq 3.25}, it yields 
\begin{equation}\label{eq 3.27}
\min \left\{ \frac{k}{2} \log \phi, l \log 10 \right\} < 4.67 \times 10^{13} \log n.
\end{equation}
Now, we distinguish two cases according to the above inequality \eqref{eq 3.27}.\\
\textbf{Case 1} Suppose that $\frac{k}{2} \log \phi = \min \left\{ \frac{k}{2} \log \phi, l \log 10 \right\}$. In this case, we have
\[
k < 1.95 \times 10^{14} \log n.
\]
Putting the above result into \eqref{eq 3.22}  and using the fact $k < n$, it yields
\begin{align*}
n & < 1.34 \times 10^{30} (1.95 \times 10^{14} \log n)^{8} (\log  n)^{5} \\
& < 2.81 \times 10^{144} (\log n)^{13}.
\end{align*}
Further using Lemma \ref{lem 2.8}, it leads to 
\begin{equation}\label{eq 3.28}
    n < 1.41 \times 10^{181}.
\end{equation}
\textbf{Case 2} Suppose that $l \log 10 = \min \left\{ \frac{k}{2} \log \phi, l \log 10 \right\}$, then we have 
\begin{equation}\label{eq 3.29}
   l \log 10 < 4.67 \times 10^{13} \log n.
\end{equation}
Using inequality \eqref{eq 3.23}, we can rearrange \eqref{eq 1.2} as
\[
\frac{2 \phi^{2n+1}}{\phi +2} - 10^{m} \left( \frac{a \cdot 10^{l}-a+b}{9} \right) = \frac{-b}{9} - \zeta \frac{2 \phi^{2n+1}}{\phi +2}. 
\]
Taking absolute values on both sides, we obtain
\[
\left| \frac{2 \phi^{2n+1}}{\phi +2} - 10^{m} \left( \frac{a \cdot 10^{l}-a+b}{9} \right) \right| =  \frac{b}{9} + |\zeta| \frac{2 \phi^{2n+1}}{\phi +2}, 
\]
where $|\zeta| < \frac{41}{\phi^{k/2}}$. Dividing $\frac{2 \phi^{2n+1}}{\phi +2}$ on both sides of the above equation, we get 
\begin{align}\label{eq 3.30}
\left|10^{m} \phi^{-(2n+1)} \frac{(a \cdot 10^{l}-a+b)(\phi +2)}{18}  - 1 \right| & <  \frac{b(\phi +2)}{18 \phi^{2n+1}} + |\zeta| \nonumber \\  
& < \frac{\left(\frac{\phi+2}{2\phi}\right)}{\phi^{2k}} + \frac{41}{\phi^{k/2}} \nonumber \\
& < \frac{1.12}{\phi^{2k}} + \frac{41}{\phi^{k/2}} < \frac{42.12}{\phi^{k/2}}.    
\end{align}
Now apply Theorem \ref{thm2} to \eqref{eq 3.30} with the data $s:= 3$ and 
\[
\Lambda_{4} := 10^{m} \phi^{-(2n+1)} \frac{(a \cdot 10^{l}-a+b)(\phi +2)}{18}  - 1,
\]
where 
\[
 \eta_{1}:= 10 , \quad  \eta_{2}:= \phi, \quad  \eta_{3}:= \frac{(a \cdot 10^{l}-a+b)(\phi +2)}{18},
\]
and
\[ 
a_{1}:= m, \quad a_{2}:= -(2n+1), \quad a_{3}:= 1.
\]
Here $\Lambda_{4} \neq 0$. If it were, then we would get a contradiction just as $\Lambda_{3}$. Thus $\Lambda_{4} \neq 0$. As previously calculated, we can take
 \[
 d_{\mathbb{L}}:= 2, \quad B_{1}:= 2 \log 10, \quad \quad B_{2}:=  \log \phi \quad  and \quad D:= 2n+1.
 \]
Moreover, by using \eqref{eq 3.29}, we have \begin{align*}
h(\eta_{3}) & = h \left(\frac{(a \cdot 10^{l}-a+b)(\phi +2)}{18} \right) \\
& \leq h(a \cdot 10^{l}-a+b) + h(\phi +2) + h(18) \\
& \leq h(a) + h(a-b) + \log 2 + l h(10)  + \log 2 + h(\phi) + \log 2 + h(18) \\ 
& \leq 3 \log 9 + 4 \log 2+ l \log 10 + \frac{\log \phi}{2} \\
& \leq 3 \log 9 + 4 \log 2 +  \frac{\log \phi}{2} + 4.67 \times 10^{13} \log n  \\
& \leq \log 27 + \log 16 + \frac{\log \phi}{2} + 4.67 \times 10^{13} \log n \\
& < 4.68 \times 10^{13} \log n. 
\end{align*}
Hence, we can take $B_{3}:= 9.36 \times 10^{13} \log n.$ According to Theorem \ref{thm2} and \eqref{eq 3.30}, we obtain 
\[
\frac{k}{2} \log \phi - \log 42.12 < 4.83 \times 10^{26} (\log n)^{2}
\]
and hence
\begin{equation}\label{eq 3.31}
k < 2.1 \times 10^{27} (\log n)^{2}.   
\end{equation}
Inserting \eqref{eq 3.31} into \eqref{eq 3.22} with the fact $k < n$, we obtain
\begin{align*}
n & < 1.34 \times 10^{30} (2.1 \times 10^{27} \log^{2} n)^{8} (\log n)^{5} \\
& < 5.1 \times 10^{248} (\log n)^{21},
\end{align*}
which implies that
\begin{equation}\label{eq 3.32}
    n < 8.82 \times 10^{312}.
\end{equation}
In both cases, we find that \eqref{eq 3.32} is always true. This bound on $n$ is valid for $k > 550$. If $k \leq 550$, then we have from \eqref{eq 3.22} that
\begin{align*}
n & < 1.34 \times 10^{30} (550)^{8} [\log (550)]^{5} \\
& < 1.13 \times 10^{56}.
\end{align*}
As a result, \eqref{eq 3.32} holds for all $k$.
The result established in this subsection is summarized in the following lemma.
\begin{lemma}\label{lem 3.2}
Let $n > 250$. If $n$ is a solution of the Diophantine equation \eqref{eq 1.2},
then
\[
n < 8.82 \times 10^{312}.
\]
\end{lemma}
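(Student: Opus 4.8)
The plan is to promote the $k$-dependent estimate of Lemma \ref{lem3.1} to a bound that no longer involves $k$, by splitting on the size of $k$ and, in the large-$k$ regime, replacing the dominant root $\gamma$ by $\phi^{2}$ so that every linear form in logarithms lives in the \emph{fixed} quadratic field $\mathbb{Q}(\sqrt5)$ rather than in $\mathbb{Q}(\gamma)$ of degree up to $k$. First I would fix the threshold $k \geq 550$; for such $k$ the bound \eqref{eq 3.22} already forces $n < \phi^{k/2}$, which is precisely the hypothesis required to invoke Lemma \ref{lem 2.4}. That lemma rewrites $Q_{n}^{(k)}$ as $\frac{2\phi^{2n+1}}{\phi+2}(1+\zeta)$ with $|\zeta| < 41/\phi^{k/2}$, and the decisive gain is that the degree $d_{\mathbb{L}}$ entering Matveev's theorem collapses from $k$ to $2$, so the resulting constants become independent of $k$.

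Next I would substitute this approximation into \eqref{eq 1.2} and isolate a linear form $\Lambda_{3}$ whose varying bases are only $\phi$ and $10$. A routine nonvanishing check (were $\Lambda_{3}=0$, an irrational would equal a rational) shows $\Lambda_{3}\neq 0$, and applying Theorem \ref{thm2} with $d_{\mathbb{L}}=2$ yields a bound of the form $-\log|\Lambda_{3}| < 4.67\times 10^{13}\log n$. Comparing this lower estimate with the explicit upper estimate for $|\Lambda_{3}|$ produces the two-sided inequality
\[
\min\left\{\tfrac{k}{2}\log\phi,\ l\log 10\right\} < 4.67\times 10^{13}\log n,
\]
after which I would branch on which of the two terms realizes the minimum.

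In the first case the inequality reads $k < 1.95\times 10^{14}\log n$; feeding this back into \eqref{eq 3.22} (and using $k<n$) gives $n < 2.81\times 10^{144}(\log n)^{13}$, which Lemma \ref{lem 2.8} with $m=13$ collapses to $n < 1.41\times 10^{181}$. In the second case I would instead have $l\log 10 < 4.67\times 10^{13}\log n$, and this bound on $l$ is exactly what is needed to control the logarithmic height of the third base $\eta_{3}=\frac{(a\cdot 10^{l}-a+b)(\phi+2)}{18}$ appearing in a \emph{second} linear form $\Lambda_{4}$. A further application of Theorem \ref{thm2} then delivers $k < 2.1\times 10^{27}(\log n)^{2}$; substituting into \eqref{eq 3.22} and invoking Lemma \ref{lem 2.8} once more (now with $m=21$) yields $n < 8.82\times 10^{312}$.

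Finally, for the complementary range $k\leq 550$ the bound \eqref{eq 3.22} is already purely numerical and gives $n < 1.13\times 10^{56}$, which is dominated by $8.82\times 10^{312}$; collecting the three estimates proves the claim. I expect the main obstacle to be the bootstrapping logic rather than any single computation: the argument first uses Lemma \ref{lem3.1} to license the $\phi^{2}$-approximation, then uses the case-wise control on $k$ (or on $l$, which in turn bounds $k$ through the second logarithmic form) to re-enter \eqref{eq 3.22} and close the loop. Keeping the two regimes $k\geq 550$ and $k\leq 550$ consistent, and verifying that the hypotheses $n>1$ and $n<\phi^{k/2}$ of Lemma \ref{lem 2.4} genuinely hold throughout the large-$k$ analysis, is the part that will require care.
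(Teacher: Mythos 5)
Your proposal follows the paper's own proof essentially verbatim: the same threshold $k \geq 550$ to license Lemma \ref{lem 2.4}, the same linear forms $\Lambda_{3}$ and $\Lambda_{4}$ over $\mathbb{Q}(\sqrt{5})$, the same case split on $\min\{\tfrac{k}{2}\log\phi,\, l\log 10\}$, and the same numerical constants throughout, down to the final bounds $1.41\times 10^{181}$, $8.82\times 10^{312}$ and $1.13\times 10^{56}$. The approach and its execution match the paper's argument.
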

\subsection{Reducing the bound on $n$}
The upper bound on $n$ given in Lemma \ref{lem 3.2} is too large for computation, therefore we will now apply Lemma \ref{lem 2.6} to reduce it. Let
\[
\Gamma_{3} := (l+m) \log 10 - (2n+1) \log \phi + \log \left( \frac{a(\phi +2)}{18} \right).
\]
From inequality \eqref{eq 3.25}, we have that
\[
|\Lambda_{3}| = |e^{\Gamma_{3}} - 1| < 57.5 \cdot \max \left\{ \frac{1}{\phi^{k/2}}, \frac{1}{10^{l}} \right\} < \frac{57.5}{e^{w}},
\]
where $w = \min \left\{ \frac{k}{2} \log \phi, l \log 10 \right\}$. Assume that $l \geq 2$. Then $|\Lambda_{3}| < \frac{57.5}{100} < 0.58$. Choosing $a: = 0.58$, we obtain the inequality 
\[
|\Gamma_{3}| = |\log (\Lambda_{3} + 1)| < \frac{- \log(1 - 0.58)}{0.58} \cdot \frac{57.5}{e^{w}} < \frac{86.1}{e^{w}}.
\]
Dividing both sides of the above inequality by $\log \phi$, we get 
\begin{equation}\label{eq 3.33}
    0 < |(l+m) \log \widehat{\tau}- (2n +1) +\widehat{\mu}| < A \cdot C^{-w},
\end{equation}
where 
\[
\widehat{\tau} := \frac{\log 10}{\log \phi}, \quad \widehat{\mu} := \frac{ \log \left( \frac{a(\phi +2)}{18} \right)}{\log \phi}, \quad A:= 178.92, \quad C:= e.
\]
Here, $M := 8.82 \times 10^{312}$ is an upper bound on $l+m$ since $l+m < n < 8.82 \times 10^{312}$ by \eqref{eq 3.13} and Lemma \ref{lem 3.2}. We found that $q_{620}$, the denominator of the $620$th convergent of $\widehat{\tau}$ exceeds $6M$. Hence, applying Lemma \ref{lem 2.6} to the inequality \eqref{eq 3.33}, a quick computation with \textit{Mathematica} gives us that the value
\[
\frac{\log ( A q_{620}/\epsilon)}{\log C}
\]
is less than $744.38$. So, if the inequality $\eqref{eq 3.33}$ has a solution, then 
\[
w < \frac{\log ( A q_{620}/\epsilon)}{\log C} < 744.38.
\]
Assume that $k \geq 3200$. In this case, $\frac{k}{2} \log \phi > 744$. Thus $w = l \log 10 \leq 744$, which implies that $l \leq 323$.

Now, applying Lemma \ref{lem 2.6} to \eqref{eq 3.30}. Assume that  
\[
\Gamma_{4} := m \log 10 - (2n+1) \log \phi + \log \left( \frac{(a \cdot 10^{l}-a+b)(\phi +2)}{18} \right).
\]
Then $\Lambda_{4} = e^{\Gamma_{4}} -1$ and from \eqref{eq 3.30}, we have 
\[
|\Lambda_{4}| = |e^{\Gamma_{4}} -1| < \frac{42.12}{\phi^{k/2}} < \frac{1}{10}.
\]
Choosing $a: = 0.1$ in Lemma \ref{lem 2.7}, we obtain the inequality 
\[
|\Gamma_{4}| = |\log (\Lambda_{4} + 1)| < \frac{- \log(1 - 0.1)}{0.1} \cdot \frac{42.12}{\phi^{k/2}} < \frac{44.38}{\phi^{k/2}}.
\]
Dividing this inequality by $\log \phi$, we get 
\begin{equation}\label{eq 3.34}
    0 < |m \widehat{\tau}- (2n +1) +\widehat{\mu}| < A \cdot C^{-(k/2)},
\end{equation}
where 
\[
\widehat{\tau} := \frac{\log 10}{\log \phi}, \quad \widehat{\mu} := \frac{ \log \left( \frac{(a \cdot 10^{l}-a+b)(\phi +2)}{18} \right)}{\log \phi}, \quad A:= 92.22, \quad C:= \phi.
\]
Taking 
\[
M := 8.82 \times 10^{312},
\]
which is an upper bound on $m$ since $m < n < 8.82 \times 10^{312}$ by \eqref{eq 3.13} and Lemma \ref{lem 3.2}. We found that $q_{630}$, the denominator of the $630$th convergent of $\widehat{\tau}$ exceeds $6M$. Hence, applying Lemma \ref{lem 2.6} to the inequality \eqref{eq 3.34}, a quick computation with \textit{Mathematica} gives us that
\[
\frac{k}{2} < \frac{\log ( A q_{630}/\epsilon)}{\log C} < 1584,
\]
that is $k \leq 3166$. This contradicts our assumption that $k \geq 3200$. Thus we proved that if $k \geq 550$, then $k < 3200$.

Now, we assume that $k > 550$. Since $k < 3200$, it follows from \eqref{eq 3.21} that
\begin{align*}
     n & < 1.34 \cdot 10^{30} \cdot 3200^{8} \cdot (\log 3200)^{5} \\
     & < 5.1 \cdot 10^{62}.
\end{align*}
Now, if we again apply Lemma \ref{lem 2.6} to \eqref{eq 3.33} with $M := 5.1 \cdot 10^{62}$, which is an upper bound on $l + m$ as $l + m < n < 5.1 \cdot 10^{62}$, we get
\[
w < \frac{\log ( A q_{135}/\epsilon)}{\log C} < 154.342.
\]
Since $\frac{k}{2} \log \phi > 132$ for $k > 550$, it is clear that $w = l \log 10$, which implies
that $l \leq 66$. Similarly, applying Lemma \ref{lem 2.6} to \eqref{eq 3.34} and using this value of $M$. In this case, we have $k \leq 550$. This contradicts our assumption that $k > 550$. Thus, $k \leq 550$. Substituting this upper bound on $k$ into \eqref{eq 3.21}, we obtain
\begin{align*}
n & < 1.34 \cdot  10^{30} \cdot 550^{8} \cdot (\log 550)^{5} \\
& < 1.13 \cdot 10^{56}.
\end{align*}
Now, let us apply Lemma \ref{lem 2.6} to \eqref{eq 3.15}. For this, let
\[
\Gamma_{1} := (l+m) \log 10 - n \log \gamma + \log \left( \frac{ 9 (2 \gamma -2) g_{k}(\gamma)}{a} \right).
\]
So, $\Lambda_{1} = e^{\Gamma_{1}} - 1$. Assuming that $m \geq 2$, it is seen that from  \eqref{eq 3.15} that  $|\Lambda_{1}| < \frac{11.8}{100} < 0.12$. Choosing $a: = 0.12$ in Lemma \ref{lem 2.7}, we obtain
\[
|\Gamma_{1}| = |\log (\Lambda_{1} + 1)| < \frac{- \log(1 - 0.12)}{0.12} \cdot \frac{11.8}{10^{l}} < \frac{12.58}{10^{l}}.
\]
Dividing this inequality by $\log \gamma$, we get 
\begin{equation}\label{eq 3.35}
    0 < |(l+m) \widehat{\tau}- (2n +1) +\widehat{\mu}| < A \cdot C^{-l},
\end{equation}
where 
\[
\widehat{\tau} := \frac{\log 10}{\log \gamma}, \quad \widehat{\mu} := \frac{ \log \left( \frac{ 9 (2 \gamma -2) g_{k}(\gamma)}{a} \right)}{\log \gamma}, \quad A:= 18.15, \quad C:= 10.
\]
Put 
\[
M := 1.13 \cdot 10^{56},
\]
in Lemma \ref{lem 2.6}. We found for all $k \in [2, 550]$ that $q_{135}$, the denominator of the $135$th convergent of $\gamma$ exceeds $6M$. A quick computation with \textit{Mathematica}
gives us that if the inequality \eqref{eq 3.15} has a solution, then
\[
l < \frac{\log ( A q_{135}/\epsilon)}{\log C} < 89.14.
\]
That is, $l \leq 89$. 

\noindent
Finally, assume that
\[
\Gamma_{2}:= m \log 10 -n \log \gamma + \log  \left( \frac{a \cdot 10^{l} -a +b}{9(2\gamma - 2) g_{k}(\gamma)} \right).
\]
Then $\Lambda_{2} = e^{\Gamma_{2}} - 1$ and we can see from \eqref{eq 3.25} that $|\Lambda_{2}| < \frac{5.5}{\gamma^{n}} < 0.1$ as $n > 250$. Taking $a := 0.1$ in Lemma \ref{lem 2.7}, we obtain
\[
0 < \left|m \log 10 - n \log \gamma + \log \left( \frac{a \cdot 10^{l} -a +b}{9(2\gamma - 2) g_{k}(\gamma)} \right) \right|< \frac{- \log(1 - 0.1)}{0.1} \cdot \frac{5.5}{\gamma^{n}} < \frac{5.8}{\gamma^{n}}.
\] 
Dividing this inequality by $\log \gamma$ gives us the inequality
\begin{equation}\label{eq 3.36}
    0 < \left| m \frac{\log 10}{\log \gamma} - n  + \frac{\log \left( \frac{a \cdot 10^{l} -a +b}{9(2\gamma - 2) g_{k}(\gamma)} \right)}{\log \gamma} \right| < \frac{8.37}{\gamma^{n}}.
\end{equation}
Let
\[
\widehat{\tau} := \frac{\log 10}{\log \gamma}, \quad M :=  1.13 \cdot 10^{56}, \quad A:= 8.37, \quad C:= \gamma, \quad w:= n
\]
and
\[
\widehat{\mu} := \frac{ \log \left( \frac{ 9 (2 \gamma -2) g_{k}(\gamma)}{a} \right)}{\log \gamma}
\]
in Lemma \ref{lem 2.6}. If the inequality \eqref{eq 3.36} has a solution, then 
\[
w:= n < \frac{\log ( A q_{150}/\epsilon)}{\log C} \leq 219.568.
\]
But, this contradicts the fact that $n > 250$. Thus, the proof is completed.

\vspace{05mm} \noindent \footnotesize
\begin{minipage}[b]{90cm}
\large{School of Applied Sciences, \\ 
KIIT University, Bhubaneswar, \\ 
Bhubaneswar 751024, Odisha, India. \\
Email: bptbibhu@gmail.com}
\end{minipage} 

\vspace{05mm} \noindent \footnotesize
\begin{minipage}[b]{90cm}
\large{School of Applied Sciences, \\ 
KIIT University, Bhubaneswar, \\ 
Bhubaneswar 751024, Odisha, India. \\
Email: bijan.patelfma@kiit.ac.in}
\end{minipage}

\end{document}